\newtheorem{thm}{Theorem}[section]
\newtheorem{cor}{Corollary}[section]
\newtheorem{lem}{Lemma}[section]
\newtheorem{prop}{Proposition}[section]
\theoremstyle{definition}
\newtheorem{defn}{Definition}[section]
\newtheorem{exam}{Example}[section]
\theoremstyle{remark}
\newtheorem{rem}{Remark}[section]
\numberwithin{equation}{section}
\def\N{{\rm I\kern-0.16em N}}
\def\R{{\rm I\kern-0.16em R}}
\def \E{{\rm I\kern-0.16em E}}
\def\P{{\rm I\kern-0.16em P}}
\def\F{{\rm I\kern-0.16em F}}
\def\B{{\rm I\kern-0.16em B}}
\def\C{{\rm I\kern-0.46em C}}
\def\G{{\rm I\kern-0.50em G}}
\begin{document}

\title
{Rates of convergence in the CLT for nonlinear statistics under relaxed moment conditions}
\author{Nguyen Tien Dung\thanks{Department of Mathematics, VNU University of Science, Vietnam National University, Hanoi, 334 Nguyen Trai, Thanh Xuan, Hanoi, Vietnam. Email: dung@hus.edu.vn}
}
\date{\today}
\maketitle
\begin{abstract}This paper is concerned with normal approximation under relaxed moment conditions using Stein's method. We obtain the explicit rates of convergence in the central limit theorem for (i) nonlinear statistics with finite absolute moment of order $2+\delta\in(2,3];$ (ii) nonlinear statistics with vanishing third moment and finite absolute moment of order $3+\delta\in(3,4].$ When applied to specific examples, these rates are of the optimal order $O(n^{-\frac{\delta}{2}})$ and $O(n^{-\frac{1+\delta}{2}}).$ Our proof are based on the covariance identify formula and simple observations about the solution of Stein's equation.
\end{abstract}
\noindent\emph{Keywords:} Central limit theorem, rate of convergence, nonlinear statistics, Stein's method.\\
{\em 2010 Mathematics Subject Classification:} 60F05, 62E17.
\section{Introduction}

Let $X=(X_1,X_2,...,X_n)$ be a vector of independent random variables (not necessarily identically distributed). We consider the problem of normal approximations for nonlinear statistics of the form
\begin{equation}\label{lesser3}
F=F(X_1,...,X_n).
\end{equation}
We recall that this is one of the most fundamental problems in the theory of mathematical statistics. The main task is to investigate the rate of convergence in the central limit theorem (CLT) for $F.$ When $F$ has finite absolute moments of order $p\geq3,$ this problem has been well studied. A significant amount of normal approximation results for $F$ and its special forms can be found in the literature. The reader can consult the monograph \cite{Chen2007} for a detailed representation of this topic. 

We now consider the case, where $F$ only has finite absolute moments of order $2+\delta\in(2,3].$ This case is more difficult to study and it requires some new ideas. It seems to us that not too many general results can be found in the literature.
\begin{itemize}
\item For the partial sum of $\mathbb{R}$-valued independent random variables $S_n:=\sum\limits_{k=1}^n(X_k-\mu_k)$, the classical result proved by Lyapunov in 1901 says that $S_n/\sigma_n$ converges in distribution to a standard normal random variable $N$ if
\begin{equation}\label{iif2}
\lim\limits_{n\to \infty}\frac{1}{\sigma_n^{2+\delta}}\sum\limits_{k=1}^nE|X_k-\mu_k|^{2+\delta}=0,
 \end{equation}
where $\mu_k:=E[X_k]$ and $\sigma^2_n:=\sum\limits_{k=1}^nE|X_k-\mu_k|^{2}.$ Sixty five years latter, the rate of convergence in Lyapunov's central limit theorem was established by Bikjalis \cite{Bikjalis1966} and Ibragimov \cite{Ibragimov1966}. They obtained the following error bound
\begin{equation}\label{iif}
d_1(S_n/\sigma_n,N)\leq \frac{C_\delta}{\sigma_n^{2+\delta}}\sum\limits_{k=1}^nE|X_k-\mu_k|^{2+\delta}
\end{equation}
for some constant $C_\delta$ depending only on $\delta,$ where $d_1$ denotes the Wasserstein distance. If, in addition, the random variables are identically distributed, then $d_1(S_n,N)=O(n^{-\frac{\delta}{2}})$ and this rate is optimal, see e.g. \cite{Rio2009} for a short survey.
\item More recently, the optimal rate of convergence for certain nonlinear statistics was also obtained in \cite{Bentkus2009,Chen2004,Chen2007}.  Meanwhile, Bentkus et al. \cite{Bentkus2009} focus on $U$-statistics, Chen and Shao \cite{Chen2004,Chen2007} investigate the sum of locally dependent random variables and the nonlinear statistics that can be written as $F=W+\Delta,$ where $W$ is a linear statistic and $\Delta$ is an error term.
 \item Surprisingly, to the best of our knowledge, a systematic study for nonlinear statistics of the general form (\ref{lesser3}) is still missing. This is the first motivation of the present paper. In fact, our Theorem \ref{lesser4} will partially fill up this gap by providing explicit bounds on Wasserstein distance for the rate of convergence.
\end{itemize}
Another motivation of this paper comes from the vanishing third moment phenomenon discussed in Section 4.8 of \cite{Chen2011}. This phenomenon says that, under additional moment assumptions, the standard convergence rate $O(n^{-\frac{1}{2}})$ can be improved to $O(n^{-1}).$ Let us recall the following.
\begin{prop}\label{jjggk4} Let $N$ be a standard normal distribution and $X,X_1,...,X_n$ be independent and identically distributed mean zero, variance one random variables with $EX^3=0$ and $E|X|^4$ finite. Then, for $\bar{S}_n:=n^{-1/2}(X_1+...+X_n),$  we have
$$d_{\mathcal{H}_4}(\bar{S}_n,N):=\sup\limits_{h\in \mathcal{H}_4}|E[h(\bar{S}_n)]-E[h(N)]|\leq \frac{1}{24n}\left(11+E|X|^4\right),$$
where $\mathcal{H}_4:=\{h:\mathbb{R}\to \mathbb{R}:\max\limits_{0\leq k\leq 4}\|h^{(k)}\|_\infty\leq 1\}$ and $\|.\|_\infty$ denotes the supremum norm.
\end{prop}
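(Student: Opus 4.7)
My plan is to apply Stein's method with a smooth test function. Fix $h\in\mathcal H_4$ and let $f=f_h$ denote the unique bounded solution of the Stein equation
\begin{equation*}
f'(x)-xf(x)=h(x)-E[h(N)],
\end{equation*}
so that $E[h(\bar S_n)]-E[h(N)]=E[f'(W)-Wf(W)]$, writing $W:=\bar S_n$. Standard smoothing estimates (via the Ornstein--Uhlenbeck semigroup representation of $f$) provide explicit sup-norm bounds $\|f^{(k)}\|_\infty\leq c_k$ for $k=1,\dots,4$, depending only on the $\|h^{(j)}\|_\infty\leq 1$, so every derivative of $f$ that I will need in the Taylor expansions is bounded.

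I would set $W^{(i)}:=W-X_i/\sqrt n$, which is independent of $X_i$, and write $E[Wf(W)]=n^{-1/2}\sum_{i=1}^n E[X_if(W)]$. A third-order Taylor expansion of $f(W)$ about $W^{(i)}$, combined with the independence of $X_i$ and $W^{(i)}$, yields
\begin{equation*}
E[X_if(W)]=\sum_{k=0}^{3}\frac{E[X^{k+1}]}{k!\,n^{k/2}}\,E[f^{(k)}(W^{(i)})]+E[X_iR_i^{(1)}].
\end{equation*}
The assumptions $EX=EX^3=0$ and $EX^2=1$ annihilate the $k=0$ and $k=2$ terms, leaving only $\tfrac{1}{\sqrt n}E[f'(W^{(i)})]+\tfrac{EX^4}{6n^{3/2}}E[f'''(W^{(i)})]$ together with the remainder. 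An analogous second-order expansion of $f'(W)$ about $W^{(i)}$ gives $E[f'(W^{(i)})]=E[f'(W)]-\tfrac{1}{2n}E[f'''(W^{(i)})]-E[R_i^{(2)}]$. Substituting the latter identity into the sum of the former over $i$, every $O(n^{-1/2})$-contribution cancels by the odd-moment assumptions, and the two $n^{-1}$-contributions coming from $f'''$ combine into $\tfrac{1}{n^2}(\tfrac12-\tfrac{EX^4}{6})\sum_{i=1}^n E[f'''(W^{(i)})]$.

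I expect the main obstacle to be the control of the cubic remainder $R_i^{(1)}$ using only the finite fourth moment of $X$: the naive Lagrange bound $|R_i^{(1)}|\leq|X_i/\sqrt n|^4\|f^{(4)}\|_\infty/24$, after multiplication by $X_i$, would require $E|X|^5$. I would bypass this by rewriting the cubic remainder as
\begin{equation*}
R_i^{(1)}=\frac{X_i^3}{6n^{3/2}}\bigl[f'''(\xi_i)-f'''(W^{(i)})\bigr]
\end{equation*}
for some $\xi_i$ between $W^{(i)}$ and $W$, and estimating the bracket crudely by $2\|f'''\|_\infty$; this yields $|E[X_iR_i^{(1)}]|\leq\|f'''\|_\infty EX^4/(3n^{3/2})$, which contributes at the expected order $E|X|^4/n$ after summation. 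An analogous treatment of $R_i^{(2)}$ finishes the job. The remaining work is careful numerical bookkeeping: inserting the sharp Stein-equation bounds on $\|f^{(k)}\|_\infty$ and collecting the Taylor factorials should recover the announced prefactor $(11+E|X|^4)/24$.
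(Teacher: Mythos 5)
The paper does not actually prove this Proposition; it is recalled with a citation to Section~4.8 of \cite{Chen2011} and is used later only as a benchmark, so there is no in-paper argument to compare your proposal against. Judged on its own, your plan is the standard Stein/Taylor route and its architecture is sound: solve the Stein equation with a smooth $f$, Taylor-expand $f(W)$ to third order and $f'(W)$ to second order about the leave-one-out sum $W^{(i)}$, let $EX=EX^3=0$ kill the low-order terms, and collect the surviving $f'''$ terms into an $O(n^{-1})$ contribution. You have also correctly identified the real obstacle, namely that the naive fourth-order Lagrange remainder would demand $E|X|^5$.

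The gap lies in the specific device you use to avoid the fifth moment and in the last sentence of the proposal. Replacing the increment of $f'''$ by the crude bound $2\|f'''\|_\infty$ is legitimate and does deliver a genuine $O(n^{-1})$ bound under $E|X|^4<\infty$ alone, but it is quantitatively too lossy to reproduce the constant $(11+E|X|^4)/24$. Concretely, your estimate
$|E[X_iR_i^{(1)}]|\le E|X|^4\,\|f'''\|_\infty/(3n^{3/2})$ gives, after summation over $i$ and the extra $n^{-1/2}$ normalization, a contribution $\|f'''\|_\infty E|X|^4/(3n)$. Even with the sharpest Ornstein--Uhlenbeck semigroup bound $\|f'''\|_\infty\le\tfrac13\|h'''\|_\infty\le\tfrac13$, this single piece is already $E|X|^4/(9n)$, and combining it with the $\left(\tfrac{1}{2n}-\tfrac{E|X|^4}{6n}\right)f'''$ term and the analogous bound on $R_i^{(2)}$ pushes the coefficient of $E|X|^4$ to roughly $\tfrac16$, about four times the target $\tfrac{1}{24}$. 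No downstream bookkeeping will repair this, because the loss is baked in at the step where the Lipschitz structure of $f'''$ is discarded in favour of a sup-norm bound. What you can honestly claim is a bound of the form $(C_1+C_2E|X|^4)/n$ with an explicit but larger $C_2$; the assertion that ``careful numerical bookkeeping should recover the announced prefactor'' is overoptimistic.

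To actually reach $(11+E|X|^4)/(24n)$ one must keep the remainders in integral form and exploit the signed, averaged structure (e.g.\ through the concentration/$K$-function or zero-bias manipulations that underlie the cited argument in \cite{Chen2011}), so that the factor $1/24=1/4!$ appears without a spurious multiple. You should either replace the crude $2\|f'''\|_\infty$ step with such a refinement, or state only the weaker explicit constant that your proof actually yields.
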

The moment condition $E|X|^4<\infty$ is the best possible one to achieve the rate $O(n^{-1}).$ So there are two open questions arising here: (i) Find the rate of convergence under moment condition $E|X|^{3+\delta}<\infty$ for some $\delta\in(0,1],$ (ii) Generalize the above phenomenon for nonlinear statistics (\ref{lesser3}). Our Theorem \ref{ltser4a} will provide a complete answer to both those questions. In fact, when applied to $\bar{S}_n,$ we obtain the rate $O(n^{-\frac{1+\delta}{2}}).$

Powerful as it is, Stein's method will be the main tool to prove our Theorems \ref{lesser4} and \ref{ltser4a}. Recall that this method was proposed by Stein in 1970's and since then, many different techniques have been developed to use it. The present paper will continue employing the technique based on difference operators which was used in our recent paper \cite{dungnt2019}. The key allowing us to relax moment conditions is simple observations about the solution of Stein's equation, see Propositions \ref{tt1} and \ref{tt2}.

The rest of the paper is organized as follows. Our main results (Theorems \ref{lesser4} and \ref{ltser4a}) are described in Section \ref{9jk2}. Some illustrative examples with detailed computations are given in Section \ref{hj48y}. Proofs of the main theorems are given in Section \ref{8i2}. Some useful moment inequalities are provided in Section \ref{hj48y9}.
\section{The main results}\label{9jk2}Throughout this paper let $N$ denote a standard normal random variable. To measure the distance to normality of a random variable $G,$ we will use the following two distances

\noindent $\bullet$ $d_1$-distance (or Wasserstein distance) defined by
$$d_1(G,N):=\sup\limits_{h\in \mathcal{C}^1:\|h'\|_\infty\leq 1}|E[h(G)]-E[h(N)]|.$$
\noindent $\bullet$ $d_2$-distance defined by
$$d_2(G,N):=\sup\limits_{h\in \mathcal{C}^2:\|h'\|_\infty,\|h''\|_\infty\leq 1}|E[h(G)]-E[h(N)]|,$$
where $\mathcal{C}^k(k\geq 1)$ is the space of $k$-times differentiable real-valued functions on $\mathbb{R}$ and $\|.\|_\infty$ denotes the supremum norm.

We now describe the main results of this paper. Let $\mathcal{X}$ be a measurable space and $X=(X_1,X_2,...,X_n)$ be a vector of independent random variables, defined on some probability space $(\Omega,\mathfrak{F},P)$ and taking values in $\mathcal{X}.$ Let $U:\mathcal{X}^n\to \mathbb{R}$ be a measurable function, the random variable $U=U(X)$ is called a nonlinear statistic. We introduce the $\sigma$-fields
\begin{align*}
&\mathcal{F}_0:=\{\emptyset,\Omega\}\,\,\,\text{and}\,\,\,\mathcal{F}_i:=\sigma(X_k,k\leq i),\,\,i=1,...,n
\end{align*}
and
\begin{align*}
&\mathcal{G}_{n+1}:=\{\emptyset,\Omega\}\,\,\,\text{and}\,\,\,\mathcal{G}_i:=\sigma(X_k,k\geq i),\,\,i=1,...,n.
\end{align*}
\begin{defn}\label{kod2ol} Given a random variable $U=U(X)\in L^1(P),$ we define the difference operators $\mathfrak{D}_i$ by
$$\mathfrak{D}_iU=U-E_i[U],\,\,1\leq i\leq n,$$
where $E_i$ denotes the expectations with respect to $X_i.$ In addition, for each $\alpha\in [0,1],$ we define
$$\mathfrak{D}^{(\alpha)}_i U:=\alpha E[\mathfrak{D}_iU|\mathcal{F}_i]+(1-\alpha)E[\mathfrak{D}_iU|\mathcal{G}_i],\,\,1\leq i\leq n.$$
\end{defn}
We note that the difference operators $\mathfrak{D}_i$ are very useful in the study of concentration inequalities., see e.g. \cite{Bobkov2017}. In the context of normal approximations, recent papers \cite{Decreusefond2019,dungnt2019} have successfully used  those operators for nonlinear statistics with finite fourth moment. The following two properties of $\mathfrak{D}_i$ will be used in our present work.
\begin{prop}\label{77h4} Let $U=U(X)$ be in $L^p(P)$ for some $p\geq 1.$ Then, for every $i=1,...,n,$ we have

\noindent (i) $E|\mathfrak{D}_iU|^p\leq 2^p E|U|^p,$ 

\noindent (ii)  $E|\mathfrak{D}^{(\alpha)}_i U|^p\leq E|\mathfrak{D}_iU|^p$ for all $\alpha\in [0,1].$
\end{prop}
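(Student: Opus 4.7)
The plan is to treat the two parts separately; both are direct consequences of the conditional Jensen inequality combined with the convexity of $x \mapsto |x|^p$ for $p \geq 1$, so the proof will be short and essentially mechanical. There is no serious obstacle here — the proposition is really a collection of standard $L^p$-contraction properties that we will need to invoke repeatedly in the later proofs, and the point is just to record them cleanly.

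For part (i), I would start from the triangle inequality in $L^p$, writing $\|\mathfrak{D}_i U\|_p = \|U - E_i[U]\|_p \le \|U\|_p + \|E_i[U]\|_p$. The quantity $E_i[U]$ is the conditional expectation of $U$ given the $\sigma$-field generated by $\{X_k : k \neq i\}$ (since $X_1,\ldots,X_n$ are independent, integrating out $X_i$ is the same as conditioning on the rest), so by the conditional Jensen inequality applied to the convex function $x \mapsto |x|^p$ we get $E|E_i[U]|^p \le E|U|^p$. Combining, $\|\mathfrak{D}_iU\|_p \le 2\|U\|_p$, which is the stated bound $E|\mathfrak{D}_i U|^p \le 2^p E|U|^p$.

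For part (ii), the random variable $\mathfrak{D}_i^{(\alpha)}U$ is by definition a convex combination of two conditional expectations of $\mathfrak{D}_iU$. Convexity of $|\cdot|^p$ yields pointwise
\begin{equation*}
|\mathfrak{D}_i^{(\alpha)} U|^p \le \alpha\,\bigl|E[\mathfrak{D}_i U\mid \mathcal{F}_i]\bigr|^p + (1-\alpha)\,\bigl|E[\mathfrak{D}_i U\mid \mathcal{G}_i]\bigr|^p.
\end{equation*}
Then applying the conditional Jensen inequality to each term gives
\begin{equation*}
|\mathfrak{D}_i^{(\alpha)} U|^p \le \alpha\, E\bigl[|\mathfrak{D}_i U|^p\bigm|\mathcal{F}_i\bigr] + (1-\alpha)\, E\bigl[|\mathfrak{D}_i U|^p\bigm|\mathcal{G}_i\bigr].
\end{equation*}
Taking expectations on both sides, the two conditional terms both collapse to $E|\mathfrak{D}_i U|^p$ by the tower property, and the $\alpha,1-\alpha$ weights sum to one, yielding $E|\mathfrak{D}_i^{(\alpha)} U|^p \le E|\mathfrak{D}_i U|^p$ as claimed.

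The only thing one might want to double-check is the harmless measurability point that $E_i[U]$ coincides with $E[U \mid \sigma(X_k, k\neq i)]$, which is immediate from the independence of the $X_k$'s and Fubini's theorem; after that, everything reduces to Jensen. I expect the main value of this proposition to be notational rather than technical — it packages the two contraction estimates that will be plugged into later bounds without interruption.
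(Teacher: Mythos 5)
Your proof is correct and matches the paper's approach: for (ii) the paper first applies triangle inequality and then a two-term discrete H\"older inequality to reach the pointwise bound $|\mathfrak{D}^{(\alpha)}_i U|^p \le \alpha|E[\mathfrak{D}_iU\mid\mathcal{F}_i]|^p+(1-\alpha)|E[\mathfrak{D}_iU\mid\mathcal{G}_i]|^p$, which is exactly the convexity step you invoke, and then uses the conditional Jensen/Lyapunov contraction on each term just as you do. For (i) the paper merely cites Proposition~2.2 of \cite{dungnt2019}, and your $L^p$ triangle inequality plus conditional Jensen argument is the standard proof that reference records.
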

\begin{proof}The point $(i)$ was already proved in Proposition 2.2 of \cite{dungnt2019}. Let us prove the point $(ii).$ Using the discrete H\"older inequality we get
\begin{align*}
|\mathfrak{D}^{(\alpha)}_i U|&\leq \alpha^{\frac{1}{p}} |E[\mathfrak{D}_iU|\mathcal{F}_i]|\alpha^{\frac{p-1}{p}}+(1-\alpha)^{\frac{1}{p}}|E[\mathfrak{D}_iU|\mathcal{G}_i]|(1-\alpha)^{\frac{p-1}{p}}\\
&\leq \left(\alpha |E[\mathfrak{D}_iU|\mathcal{F}_i]|^p+(1-\alpha)|E[\mathfrak{D}_iU|\mathcal{G}_i]|^p\right)^{\frac{1}{p}}.
\end{align*}
Then, by Lyapunov's inequality,
\begin{align*}
E|\mathfrak{D}^{(\alpha)}_i U|^p&\leq \alpha E|E[\mathfrak{D}_iU|\mathcal{F}_i]|^p+(1-\alpha)E|E[\mathfrak{D}_iU|\mathcal{G}_i]|^p\\
&\leq \alpha E|\mathfrak{D}_iU|^p+(1-\alpha)E|\mathfrak{D}_iU|^p\\
&=E|\mathfrak{D}_iU|^p.
\end{align*}
This finishes the proof of Proposition.
\end{proof}
The next theorem is the first main result of the present paper where we provide explicit bounds on the Wasserstein distance $d_1$ for nonlinear statistics with  finite absolute moment of order lesser than $3.$
\begin{thm}\label{lesser4}Fix $\delta\in (0,1]$ and let $F=F(X)\in L^{2+\delta}(P)$ be centered with $\sigma^2:=Var(F)\in(0,\infty).$ For any $\alpha,\beta\in[0,1],$ we always have
\begin{align}
&d_1(F/{\sigma},N)\leq \frac{2}{\sigma^{2+\delta}} \sum\limits_{k=1}^nE\left[(|\mathfrak{D}_kF|^{\delta}+E_k|\mathfrak{D}_kF|^{\delta})|\mathfrak{D}^{(\beta)}_kZ^{(\alpha)}|\right]\notag\\
&\hspace{3cm}+\frac{2^{1+\delta}}{\sigma^{2+\delta}} \sum\limits_{k=1}^n E\left[(|\mathfrak{D}_kF|^{1+\delta}+E_k|\mathfrak{D}_kF|^{1+\delta})|\mathfrak{D}_k^{(\alpha)}F|\right]\label{8je4a}\\
&\leq \frac{4}{\sigma^{2+\delta}} \big(\sum\limits_{k=1}^nE|\mathfrak{D}_kF|^{2+\delta}\big)^{\frac{\delta}{2+\delta}}
\big(\sum\limits_{k=1}^nE|\mathfrak{D}_kZ^{(\alpha)}|^{\frac{2+\delta}{2}}\big)^{\frac{2}{2+\delta}}+\frac{2^{2+\delta}}{\sigma^{2+\delta}} \sum\limits_{k=1}^n E|\mathfrak{D}_kF|^{2+\delta},\label{8je4ab}
\end{align}
where $Z^{(\alpha)}:=\sum\limits_{k=1}^n\mathfrak{D}_kF\mathfrak{D}_k^{(\alpha)}F.$
\end{thm}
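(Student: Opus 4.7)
The plan is to apply Stein's method in its Wasserstein form. For each test function $h\in\mathcal{C}^{1}$ with $\|h'\|_{\infty}\leq 1$, let $f=f_{h}$ solve $f'(x)-xf(x)=h(x)-E[h(N)]$; then $\|f'\|_{\infty}$ and $\|f''\|_{\infty}$ are bounded by known absolute constants. Setting $x=F/\sigma$ and taking expectations yields
\begin{equation*}
E[h(F/\sigma)-h(N)]=E[f'(F/\sigma)]-\tfrac{1}{\sigma}E[Ff(F/\sigma)].
\end{equation*}
My strategy is to rewrite $E[Ff(F/\sigma)]$ as $\tfrac{1}{\sigma}E[Z^{(\alpha)}f'(F/\sigma)]$ up to a controllable remainder; combined with the $L^{2}$ identity $E[Z^{(\alpha)}]=\sigma^{2}$, this reduces the leading discrepancy to the covariance $\tfrac{1}{\sigma^{2}}E[(\sigma^{2}-Z^{(\alpha)})f'(F/\sigma)]$, to which the same machinery can be applied again with a second free parameter $\beta$.

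The algebraic backbone is the decomposition $F=\sum_{k=1}^{n}\mathfrak{D}_{k}^{(\alpha)}F$, valid for centered $F$ as a convex combination of the forward ($\alpha=1$) and backward ($\alpha=0$) martingale decompositions. Hence $E[Ff(F/\sigma)]=\sum_{k}E[\mathfrak{D}_{k}^{(\alpha)}F\cdot f(F/\sigma)]$. Setting $\widetilde F_{k}:=F-\mathfrak{D}_{k}F=E_{k}F$, I would Taylor-expand
\begin{equation*}
f(F/\sigma)=f(\widetilde F_{k}/\sigma)+\tfrac{\mathfrak{D}_{k}F}{\sigma}f'(F/\sigma)+R_{k},
\end{equation*}
where the $\delta$-H\"older regularity of $f'$ (a simple interpolation between $\|f'\|_\infty$ and $\|f''\|_\infty$, which is the content of Proposition \ref{tt2}) gives $|R_{k}|\leq C|\mathfrak{D}_{k}F|^{1+\delta}/\sigma^{1+\delta}$. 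Three observations close the argument for this step: (i) $f(\widetilde F_{k}/\sigma)$ is $\sigma(X_{j}:j\neq k)$-measurable while a direct calculation using the martingale/reverse-martingale structure gives $E_{k}[\mathfrak{D}_{k}^{(\alpha)}F]=0$, so its contribution vanishes; (ii) the middle term summed over $k$ produces exactly $\sigma^{-1}E[Z^{(\alpha)}f'(F/\sigma)]$; (iii) using $E_{k}[\mathfrak{D}_{k}^{(\alpha)}F]=0$ once more, $E[\mathfrak{D}_{k}^{(\alpha)}F\cdot R_{k}]=E[\mathfrak{D}_{k}^{(\alpha)}F\cdot (R_{k}-E_{k}R_{k})]$, bounded by $E[|\mathfrak{D}_{k}^{(\alpha)}F|(|R_{k}|+E_{k}|R_{k}|)]$, which produces the second sum in \eqref{8je4a}.

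Substituting back leaves $E[h(F/\sigma)-h(N)]=\tfrac{1}{\sigma^{2}}E[(\sigma^{2}-Z^{(\alpha)})f'(F/\sigma)]+(\text{the error above})$. Since $\sigma^{2}-Z^{(\alpha)}$ is centered, a second application of the covariance identity (with parameter $\beta$) rewrites the covariance as $-\sum_{k}E[\mathfrak{D}_{k}^{(\beta)}Z^{(\alpha)}\cdot\mathfrak{D}_{k}f'(F/\sigma)]$. Invariance of $f'(\widetilde F_{k}/\sigma)$ under $E_{k}$ gives $|\mathfrak{D}_{k}f'(F/\sigma)|\leq|f'(F/\sigma)-f'(\widetilde F_{k}/\sigma)|+E_{k}|f'(F/\sigma)-f'(\widetilde F_{k}/\sigma)|$, which the $\delta$-H\"older estimate controls by $C(|\mathfrak{D}_{k}F|^{\delta}+E_{k}|\mathfrak{D}_{k}F|^{\delta})/\sigma^{\delta}$, producing the first sum of \eqref{8je4a}. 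Finally, \eqref{8je4ab} follows from \eqref{8je4a} by H\"older's inequality applied both inside each expectation and across $k$, with conjugate exponents $(\tfrac{2+\delta}{\delta},\tfrac{2+\delta}{2})$ for the first sum and $(\tfrac{2+\delta}{1+\delta},2+\delta)$ for the second, together with Proposition \ref{77h4}(ii) to replace $E|\mathfrak{D}_{k}^{(\alpha)}F|^{2+\delta}$ by $E|\mathfrak{D}_{k}F|^{2+\delta}$.

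The main technical obstacle I anticipate is securing the sharp constants $2$ and $2^{1+\delta}$ appearing in \eqref{8je4a}: this forces reliance on the explicit interpolation $|f'(u)-f'(v)|\leq\|f''\|_{\infty}^{\delta}(2\|f'\|_{\infty})^{1-\delta}|u-v|^{\delta}$ rather than qualitative $\delta$-H\"older continuity, and on the exact Taylor-remainder constant $1/(1+\delta)$. Once Propositions \ref{tt1}--\ref{tt2} supply the required pointwise estimates on the Stein solution, the rest is careful bookkeeping.
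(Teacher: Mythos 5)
Your proposal is correct and follows essentially the same strategy as the paper's proof: both hinge on the $\delta$-H\"older bound for the Stein solution's derivative (Proposition \ref{tt1}, which is what you need here, not Proposition \ref{tt2}), the covariance/martingale decomposition, a first-order Taylor expansion producing a remainder of order $|\mathfrak{D}_kF|^{1+\delta}$, and a second application of the covariance identity to handle $E[f'(F/\sigma)(\sigma^2-Z^{(\alpha)})]$. The one genuine (minor) deviation is that you Taylor-expand around $\widetilde F_k=E_kF$ and exploit $E_k[\mathfrak{D}_k^{(\alpha)}F]=0$ directly, whereas the paper passes through the resampled variable $T_kF$ and the operator $E'_k$ --- your route avoids the elementary inequality $|T_kF-F|^{1+\delta}\le 2^{\delta}(|F-E_kF|^{1+\delta}+|T_kF-E_kF|^{1+\delta})$ and thus produces the constant $2$ in place of the paper's $2^{1+\delta}$ in the second sum of \eqref{8je4a}, a harmless sharpening.
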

Since the topology induced by Wasserstein distance $d_1$ is stronger than that of convergence in distribution, we obtain the following CLT for nonlinear statistics which can be considered as a natural generalization of the classical Lyapunov central limit theorem. It should be noted that, in the case of partial sums $S_n,$ the condition (\ref{o1}) is itself satisfied and the condition (\ref{o2}) is exact (\ref{iif2}).
\begin{cor}[Lyapunov's CLT]\label{8h2k} Let $(F_n)_{n\geq 1}$ be a sequence of nonlinear statistics in $L^{2+\delta}(\Omega)$ with $\sigma_n^2:=Var(F_n)\in (0,\infty).$ We put
$$L_n:=\frac{1}{\sigma_n^{2+\delta}}\sum\limits_{k=1}^nE|\mathfrak{D}_kF_n|^{2+\delta}.$$
Assume that there exist $\alpha\in[0,1]$ and $\varepsilon\in(0,\delta]$ such that
\begin{equation}\label{o1}
\sup\limits_{n\geq 1}\frac{L_n^{\frac{\delta-\varepsilon}{2}}}{\sigma_n^{2+\delta}}\sum\limits_{k=1}^nE|\mathfrak{D}_kZ^{(\alpha)}_n|^{\frac{2+\delta}{2}}<\infty,
\end{equation}
where $Z_n^{(\alpha)}:=\sum\limits_{k=1}^n\mathfrak{D}_kF_n\mathfrak{D}_k^{(\alpha)}F_n.$ Then, $\frac{F_n-E[F_n]}{\sigma_n}$ converges in distribution to a standard normal random variable as $n\to\infty$ if
\begin{equation}\label{o2}
\lim\limits_{n\to \infty}L_n=0.
\end{equation}
\end{cor}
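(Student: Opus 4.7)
The plan is to apply Theorem \ref{lesser4} to the centered statistic $F_n-E[F_n]$, show that the resulting bound on $d_1((F_n-E[F_n])/\sigma_n,N)$ vanishes under hypotheses (\ref{o1})--(\ref{o2}), and then invoke the standard fact (already noted by the author in the sentence preceding the corollary) that $d_1$-convergence implies convergence in distribution. A harmless preliminary observation is that $L_n>0$ for every $n$: if $\mathfrak{D}_k F_n\equiv 0$ for all $k$, then $F_n$ does not depend on any of the $X_i$ and is constant, contradicting $\sigma_n^2>0$.

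Starting from inequality (\ref{8je4ab}), the task decomposes into handling its two summands. The second one equals exactly $2^{2+\delta}L_n$ by the definition of $L_n$, so hypothesis (\ref{o2}) dispatches it immediately.

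The main step is the first summand,
$$\frac{4}{\sigma_n^{2+\delta}}\Big(\sum_{k=1}^n E|\mathfrak{D}_kF_n|^{2+\delta}\Big)^{\!\delta/(2+\delta)}\Big(\sum_{k=1}^n E|\mathfrak{D}_kZ_n^{(\alpha)}|^{(2+\delta)/2}\Big)^{\!2/(2+\delta)}.$$
I would rewrite the first bracket as $\sigma_n^{\delta}L_n^{\delta/(2+\delta)}$ by the definition of $L_n$, and bound the second bracket by $C^{2/(2+\delta)}\sigma_n^{2}L_n^{-(\delta-\varepsilon)/(2+\delta)}$ using hypothesis (\ref{o1}) with
$$C:=\sup_{n\ge 1}\frac{L_n^{(\delta-\varepsilon)/2}}{\sigma_n^{2+\delta}}\sum_{k=1}^n E|\mathfrak{D}_kZ_n^{(\alpha)}|^{(2+\delta)/2}<\infty.$$
Multiplying, the powers of $\sigma_n$ cancel (since $-(2+\delta)+\delta+2=0$), and the exponents of $L_n$ combine to $\frac{\delta-(\delta-\varepsilon)}{2+\delta}=\frac{\varepsilon}{2+\delta}>0$. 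The first summand is therefore bounded by $4C^{2/(2+\delta)}L_n^{\varepsilon/(2+\delta)}$, which again tends to zero by (\ref{o2}).

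I do not anticipate any real obstacle: hypothesis (\ref{o1}) is calibrated precisely to absorb the $Z_n^{(\alpha)}$-factor produced by Theorem \ref{lesser4}, and the entire argument reduces to the exponent-counting sketched above, together with the quoted fact relating $d_1$-convergence to convergence in distribution.
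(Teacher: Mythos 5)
Your proposal is correct and follows essentially the same route as the paper: apply the bound \eqref{8je4ab}, identify the second summand as $2^{2+\delta}L_n$, and re-express the first summand so that hypothesis \eqref{o1} absorbs the $Z_n^{(\alpha)}$-factor, leaving a positive power $L_n^{\varepsilon/(2+\delta)}\to 0$. The paper writes this as an exact algebraic identity while you spread the same exponent bookkeeping across an inequality with the constant $C$, but the substance is identical; your preliminary remark that $L_n>0$ is a harmless clarification that the paper leaves implicit.
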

\begin{proof}Follows directly from the bound (\ref{8je4ab}) and the following relation
$$
\frac{4}{\sigma_n^{2+\delta}} \big(\sum\limits_{k=1}^nE|\mathfrak{D}_kF_n|^{2+\delta}\big)^{\frac{\delta}{2+\delta}}
\big(\sum\limits_{k=1}^nE|\mathfrak{D}_kZ_n^{(\alpha)}|^{\frac{2+\delta}{2}}\big)^{\frac{2}{2+\delta}}
=4L_n^{\frac{\varepsilon}{2+\delta}}
\left(\frac{L_n^{\frac{\delta-\varepsilon}{2}}}{\sigma_n^{2+\delta}}\sum\limits_{k=1}^nE|\mathfrak{D}_kZ^{(\alpha)}_n|^{\frac{2+\delta}{2}}\right)^{\frac{2}{2+\delta}}
.$$
\end{proof}
The second main result of the present paper is formulated in the next theorem where we investigate the vanishing third moment phenomenon under relaxed moment condition $E|F|^{3+\delta}<\infty.$
\begin{thm}\label{ltser4a}Fix $\delta\in (0,1]$ and let $F=F(X)\in L^{3+\delta}(P)$ be centered with $\sigma^2:=Var(F)\in(0,\infty)$ and $E[F^3]=0.$ For any $\alpha,\beta,\gamma\in[0,1],$ we always have
\begin{align}
d_2(F/\sigma,N)&\leq  \frac{4}{\sigma^{3+\delta}} \sum\limits_{k=1}^nE\left[(|\mathfrak{D}_kF|^{\delta}+E_k|\mathfrak{D}_kF|^{\delta})|\mathfrak{D}^{(\gamma)}_kZ^{(\alpha,\beta)}|\right]\notag\\
&+\frac{2^{2+\delta}}{\sigma^{3+\delta}}\sum\limits_{k=1}^nE\left[(|\mathfrak{D}_kF|^{1+\delta}+E_k|\mathfrak{D}_kF|^{1+\delta})|\mathfrak{D}^{(\beta)}_kZ^{(\alpha)}|\right]
\notag\\
&+\frac{2^{2+\delta}}{\sigma^{3+\delta}}\sum\limits_{k=1}^nE\left[(|\mathfrak{D}_kF|^{2+\delta}+E_k|\mathfrak{D}_kF|^{2+\delta})|\mathfrak{D}^{(\alpha)}_kF|\right]
\label{kofd4}\\
&\leq \frac{8}{\sigma^{3+\delta}}\big(\sum\limits_{k=1}^nE|\mathfrak{D}_kF|^{3+\delta}\big)^{\frac{\delta}{3+\delta}}
\big(\sum\limits_{k=1}^nE|\mathfrak{D}_kZ^{(\alpha,\beta)}|^{\frac{3+\delta}{3}}\big)^{\frac{3}{3+\delta}}\notag\\
&+\frac{2^{3+\delta}}{\sigma^{3+\delta}}\big(\sum\limits_{k=1}^nE|\mathfrak{D}_kF|^{3+\delta}\big)^{\frac{1+\delta}{3+\delta}}
\big(\sum\limits_{k=1}^nE|\mathfrak{D}_kZ^{(\alpha)}|^{\frac{3+\delta}{2}}\big)^{\frac{2}{3+\delta}}+\frac{2^{3+\delta}}{\sigma^{3+\delta}}\sum\limits_{k=1}^nE|\mathfrak{D}_kF|^{3+\delta},
\label{kofd44}
\end{align}
where $Z^{(\alpha)}$ is as in Theorem \ref{lesser4} and
$$Z^{(\alpha,\beta)}:=\sum\limits_{k=1}^n\mathfrak{D}_kF\mathfrak{D}^{(\beta)}_kZ^{(\alpha)}-\frac{1}{2}\sum\limits_{k=1}^n(|\mathfrak{D}_kF|^2
+E_k|\mathfrak{D}_kF|^2)\mathfrak{D}^{(\alpha)}_kF.$$
\end{thm}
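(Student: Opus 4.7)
The plan is to adapt the covariance-identity-plus-Taylor-expansion machinery underlying Theorem \ref{lesser4} one expansion order further, trading the vanishing third moment $E[F^3]=0$ and the extra regularity of the Stein solution associated with $d_2$ test functions for an additional factor of order $|\mathfrak{D}_k F|$ in each summand. First, fix $h\in\mathcal{C}^2$ with $\|h'\|_\infty,\|h''\|_\infty\leq 1$ and let $f$ solve the Stein equation $f'(x)-xf(x)=h(x)-E[h(N)]$; the needed pointwise and $\delta$-Hölder bounds on $f',f'',f'''$ are Proposition \ref{tt2}, which I use as a black box. Evaluating at $F/\sigma$ gives
\begin{equation*}
E[h(F/\sigma)]-E[h(N)]=E[f'(F/\sigma)]-\sigma^{-1}E[F f(F/\sigma)],
\end{equation*}
and a first application of the covariance identity of \cite{dungnt2019} converts the second term into $\sigma^{-1}\sum_k E[\mathfrak{D}_k F\cdot\mathfrak{D}_k^{(\alpha)}f(F/\sigma)]$. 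A Taylor expansion $\mathfrak{D}_k^{(\alpha)}f(F/\sigma)=\sigma^{-1}f'(F/\sigma)\mathfrak{D}_k^{(\alpha)}F+R_k^{(1)}$ together with the identity $E[Z^{(\alpha)}]=\sigma^2$ re-assembles the leading part of the covariance-identity contribution with $E[f'(F/\sigma)]$ into $-\sigma^{-2}\mathrm{Cov}(f'(F/\sigma),Z^{(\alpha)})$.

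A second application of the covariance identity to $\mathrm{Cov}(f'(F/\sigma),Z^{(\alpha)})$ yields $\sum_k E[\mathfrak{D}_k Z^{(\alpha)}\cdot\mathfrak{D}_k^{(\beta)}f'(F/\sigma)]$, and a second Taylor expansion $\mathfrak{D}_k^{(\beta)}f'(F/\sigma)=\sigma^{-1}f''(F/\sigma)\mathfrak{D}_k^{(\beta)}F+R_k^{(2)}$ contributes, together with the surviving remainder $R_k^{(1)}$ (symmetrized by the inner expectation $E_k$), a main term of size $\sigma^{-3}E[f''(F/\sigma)Z^{(\alpha,\beta)}]$. The correction $-\tfrac{1}{2}\sum_k(|\mathfrak{D}_k F|^2+E_k|\mathfrak{D}_k F|^2)\mathfrak{D}_k^{(\alpha)}F$ in the definition of $Z^{(\alpha,\beta)}$ is precisely what $R_k^{(1)}$ contributes upon contraction with $\sigma^{-1}f''(F/\sigma)\mathfrak{D}_k F$. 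Now the hypothesis $E[F^3]=0$ enters: a direct computation (using the covariance identity in reverse together with $E[Z^{(\alpha)}]=\sigma^2$) shows that $E[Z^{(\alpha,\beta)}]$ is proportional to $E[F^3]$ and therefore vanishes, so $\sigma^{-3}E[f''(F/\sigma)Z^{(\alpha,\beta)}]=\sigma^{-3}\mathrm{Cov}(f''(F/\sigma),Z^{(\alpha,\beta)})$. A third and final application of the covariance identity bounds this covariance by $\sigma^{-3}\sum_k E[|\mathfrak{D}_k f''(F/\sigma)|\cdot|\mathfrak{D}_k^{(\gamma)}Z^{(\alpha,\beta)}|]$, and the $\delta$-Hölder control $|\mathfrak{D}_k f''(F/\sigma)|\leq C\sigma^{-\delta}(|\mathfrak{D}_k F|^\delta+E_k|\mathfrak{D}_k F|^\delta)$ supplied by Proposition \ref{tt2} and Proposition \ref{77h4}(ii) produces the first summand of (\ref{kofd4}).

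The remaining two summands of (\ref{kofd4}) are what is left of the Taylor remainders $R_k^{(2)}$ and $R_k^{(1)}$ at their respective steps: each is bounded by the $\delta$-Hölder modulus of $f'$ and $f$ coming from Proposition \ref{tt2}, while the $E_k|\mathfrak{D}_k F|^{p}$ companion terms appear when the absolute values are pulled through the inner conditional expectations via Proposition \ref{77h4}(ii). Finally, (\ref{kofd44}) follows from (\ref{kofd4}) by three separate Hölder inequalities with exponents $(\tfrac{3+\delta}{\delta},\tfrac{3+\delta}{3})$ for the first sum, $(\tfrac{3+\delta}{1+\delta},\tfrac{3+\delta}{2})$ for the second, and by absorbing the $E_k|\mathfrak{D}_k F|^{2+\delta}$ piece of the third via Proposition \ref{77h4}(ii) before summing. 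The main obstacle I anticipate is the delicate algebraic matching across the two Taylor-plus-covariance-identity steps, so that the second-order remainders aggregate exactly into the awkward expression $Z^{(\alpha,\beta)}$ with the correct normalization; only with that matching is $E[Z^{(\alpha,\beta)}]$ proportional to $E[F^3]$, and only then is the final $(3+\delta)$-order bound attainable. The three-parameter freedom $\alpha,\beta,\gamma\in[0,1]$ is precisely what provides the flexibility for this bookkeeping.
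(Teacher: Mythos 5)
Your proposal follows the same overall architecture as the paper's proof: Stein's equation with the $\mathcal C^2$ test class, the covariance identity of Proposition~\ref{lods3} applied repeatedly, the hypothesis $E[F^3]=0$ used via $E[Z^{(\alpha,\beta)}]=\tfrac12E[F^3]$ (the paper's Lemma~\ref{fo02}) to license one additional covariance pass, the $\delta$-H\"older modulus of $f''$ from Proposition~\ref{tt2} to control $\mathfrak{D}_kf''$, and H\"older to pass from (\ref{kofd4}) to (\ref{kofd44}). (A minor cosmetic difference: the paper expands $\mathfrak D_kf(F)$ to \emph{second} order once, while you expand $f$ and then $f'$ each to first order; this is a repackaging of the same Taylor content.)

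There is, however, one concrete misstep that invalidates the chain as you have written it. You apply the covariance identity in the form $Cov(F,f(F/\sigma))=\sum_k E[\mathfrak{D}_kF\cdot\mathfrak{D}_k^{(\alpha)}f(F/\sigma)]$ and then Taylor-expand the smoothed difference, claiming $\mathfrak{D}_k^{(\alpha)}f(F/\sigma)=\sigma^{-1}f'(F/\sigma)\mathfrak{D}_k^{(\alpha)}F+R_k^{(1)}$. This does not hold: by definition $\mathfrak{D}_k^{(\alpha)}f(F/\sigma)=\alpha E[\mathfrak{D}_kf(F/\sigma)\mid\mathcal{F}_k]+(1-\alpha)E[\mathfrak{D}_kf(F/\sigma)\mid\mathcal{G}_k]$, and the Taylor step $\mathfrak{D}_kf(F/\sigma)=\sigma^{-1}f'(F/\sigma)\mathfrak{D}_kF+\cdots$ puts $f'(F/\sigma)\mathfrak{D}_kF$ \emph{inside} a conditional expectation; since $f'(F/\sigma)$ is neither $\mathcal{F}_k$- nor $\mathcal{G}_k$-measurable, it cannot be pulled out to produce $f'(F/\sigma)\,\mathfrak{D}_k^{(\alpha)}F$. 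The paper avoids this by keeping the nonlinear functional in the plain-difference slot, $Cov(f(F),F)=\sum_kE[\mathfrak{D}_kf(F)\cdot\mathfrak{D}_k^{(\alpha)}F]$, Taylor-expanding $\mathfrak{D}_kf(F)$ so that $f'(F)$ multiplies $\mathfrak{D}_kF$ outside any conditioning, and similarly for $f'$ and $f''$ at the subsequent passes; you need the same role-swap at each stage. Separately, your claim that the $E_k|\mathfrak{D}_kF|^p$ companion terms "appear when the absolute values are pulled through the inner conditional expectations via Proposition~\ref{77h4}(ii)" is not how those terms arise: Proposition~\ref{77h4}(ii) only controls $E|\mathfrak{D}_i^{(\alpha)}U|^p$ by $E|\mathfrak{D}_iU|^p$. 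In the paper they come from the independent-copy formalism $T_k$ together with the identity $E'_k[(T_kF-F)^2]=|\mathfrak{D}_kF|^2+E_k|\mathfrak{D}_kF|^2$ and the bound $E'_k|T_kF-F|^{p}\le 2^{p-1}(|\mathfrak{D}_kF|^p+E_k|\mathfrak{D}_kF|^p)$, a mechanism your sketch never names but in fact relies on.
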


\begin{rem} By the fundamental inequality $(|a_1|+...+|a_N|)^m\leq N^{m-1}(|a_1|^m+...+|a_N|^m)$ we have
\begin{align*}
E|Z^{(\alpha)}|^\frac{2+\delta}{2}&\leq   n^\frac{\delta}{2}\sum\limits_{k=1}^nE|\mathfrak{D}_kF\mathfrak{D}_k^{(\alpha)}F|^\frac{2+\delta}{2}\\
&\leq   n^\frac{\delta}{2}\sum\limits_{k=1}^n\sqrt{E|\mathfrak{D}_kF|^{2+\delta}E|\mathfrak{D}_k^{(\alpha)}F|^{2+\delta}}\,\,\,\text{by the H\"older inequality}\\
&\leq   n^\frac{\delta}{2}\sum\limits_{k=1}^nE|\mathfrak{D}_kF|^{2+\delta}\,\,\,\text{by Proposition \ref{77h4}, $(ii)$}.
\end{align*}
Hence, the point $(i)$ of Proposition \ref{77h4} tells us that the condition $F\in L^{2+\delta}(P)$ implies $E|\mathfrak{D}_kF|^{2+\delta}<\infty$ and $E|D_kZ^{(\alpha)}|^\frac{2+\delta}{2}\leq 2^\frac{2+\delta}{2}E|Z^{(\alpha)}|^\frac{2+\delta}{2}<\infty.$ So our bounds (\ref{8je4a}) and (\ref{8je4ab}) are well defined. Similarly, the condition $F\in L^{3+\delta}(P)$ ensures that the bounds (\ref{kofd4}) and (\ref{kofd44}) are well defined.
\end{rem}
\begin{rem}In the statement of Theorems \ref{lesser4} and \ref{ltser4a}, we introduced three new  parameters $\alpha,\beta$ and $\gamma.$ Let us give here an example to show the role of those parameters. Consider the sequence
$$F_n:=(X_1+...+X_{n-1})X_n,\,\,n\geq 2,$$
where $X_1,X_2, . . . , $ be the independent and identically distributed random variables with $P(X_i = 1) = P(X_i =-1) = 1/2,\,\,i\geq 1.$
Note that $E[F_n]=0,$ $\sigma_n^2:=E[F_n^2]=n-1,$ $E[F_n^3]=0$ and $E[F_n^4]<\infty.$ Hence, in both Theorems, the moment condition is satisfied with $\delta=1$ and we expect to obtain the optimal rates of convergence $O(n^{-1/2})$ and $O(n^{-1})$ for the distances $d_1$ and $d_2,$ respectively.

We have
$$\mathfrak{D}_kF_n=X_kX_n,\,\,E[\mathfrak{D}_kF_n|\mathcal{F}_k]=0,\,\,E[\mathfrak{D}_kF_n|\mathcal{G}_k]=X_kX_n,\,\,\,1\leq k\leq n-1,$$
$$\mathfrak{D}_nF_n=(X_1+...+X_{n-1})X_n,\,\,E[\mathfrak{D}_nF_n|\mathcal{F}_n]=\mathfrak{D}_nF_n,\,\,E[\mathfrak{D}_nF_n|\mathcal{G}_n]=0,$$
and hence,
$$Z^{(\alpha)}=\alpha(X_1+...+X_{n-1})^2+(1-\alpha)(n-1).$$
\noindent{\it The choice $\alpha=1$.} The bound (\ref{8je4a}) with $\delta=1$ becomes
\begin{align*}
&d_1(F_n/{\sigma_n},N)\leq \frac{2}{(n-1)^{3/2}} \sum\limits_{k=1}^nE\left[(|\mathfrak{D}_kF|+E_k|\mathfrak{D}_kF|)|\mathfrak{D}^{(\beta)}_kZ^{(1)}|\right]\notag\\
&\hspace{3cm}+\frac{8}{(n-1)^{3/2}} E|X_1+...+X_{n-1}|^3\,\,\forall\,\,\beta\in[0,1].
\end{align*}
So this choice fails to prove the central limit theorem for $F_n/{\sigma_n}$ because $\frac{8}{(n-1)^{3/2}} E|X_1+...+X_{n-1}|^3\nrightarrow 0$ as $n\to \infty.$

\noindent{\it The choice $\alpha=0$.} We have $Z^{(0)}=n-1$ and $\mathfrak{D}^{(\beta)}_kZ^{(0)}=0\,\,\forall\,\,1\leq k\leq n$ and $\beta\in[0,1].$ Now the bound (\ref{8je4a}) with $\delta=1$ will yield the optimal rate for the Wasserstein distance. Indeed, we have
\begin{align*}
d_1(F_n/{\sigma_n},N)&\leq \frac{4}{(n-1)^{3/2}} \sum\limits_{k=1}^{n-1} E[(|X_kX_n|^{2}+E_k|X_kX_n|^{2})|X_kX_n|]\\
&=\frac{8}{\sqrt{n-1}}.
\end{align*}
Furthermore, we have $Z^{(0,\beta)}=-F_n\,\,\forall\,\,\beta\in[0,1].$ Hence, by choosing $\gamma=0,$ the bound (\ref{kofd4}) with $\delta=1$ gives us
\begin{align*}
d_2(F_n/\sigma_n,N)&\leq  \frac{4}{(n-1)^2} \sum\limits_{k=1}^{n-1}E\left[(|X_kX_n|+E_k|X_kX_n|)|X_kX_n|\right]\\
&+\frac{8}{(n-1)^2}\sum\limits_{k=1}^{n-1}E\left[(|X_kX_n|^{3}+E_k|X_kX_n|^{3})|X_kX_n|\right]\\
&\leq \frac{24}{n-1}.
\end{align*}
The reader can verify that the choice $\gamma=1$ will fail to give the above optimal rate.
\end{rem}


\section{Examples}\label{hj48y} In this section, we provide some examples to illustrate the applicability of our abstract results. Although our examples are fundamental ones, to the best of our knowledge, the results of this section are new (except the bound (\ref{thuthuy01}) which was already obtained in \cite{Bikjalis1966,Ibragimov1966}).
\subsection{Partial sums}
Let $X_1,...,X_{n}$ are independent $\mathbb{R}$-valued random variables with $\mu_k:=E[X_k],v_k^2=Var(X_k)$ and $E|X_k|^{2+\delta}<\infty$ for some $\delta\in (0,1].$ Define $S_n:=\sum\limits_{k=1}^n(X_k-\mu_k)$ and $\sigma^2_n:=\sum\limits_{k=1}^nv_k^2.$

We have
$$\mathfrak{D}_kS_n=X_k-\mu_k=E[\mathfrak{D}_kS_n|\mathcal{F}_k]=E[\mathfrak{D}_kS_n|\mathcal{G}_k],\,\,\,1\leq k\leq n.$$
Hence, for any $\alpha\in [0,1],$
$$Z^{(\alpha)}=\sum\limits_{k=1}^n\mathfrak{D}_kS_n\mathfrak{D}_k^{(\alpha)}S_n=\sum\limits_{k=1}^n (X_k-\mu_k)^2$$
and, for any $\beta\in [0,1],$
$$\mathfrak{D}_kZ^{(\alpha)}=\mathfrak{D}^{(\beta)}_kZ^{(\alpha)}=(X_k-\mu_k)^2-v_k^2,\,\,\,1\leq k\leq n.$$
It is easy to see that
$$E|\mathfrak{D}_kZ^{(\alpha)}|^{\frac{2+\delta}{2}}\leq 2^{\frac{\delta}{2}}(E|X_k-\mu_k|^{2+\delta}+(E|X_k-\mu_k|^2)^{\frac{2+\delta}{2}})\leq 2^{\frac{2+\delta}{2}}E|X_k-\mu_k|^{2+\delta}.$$
So our bound (\ref{8je4ab}) yields
\begin{equation}\label{thuthuy01}
d_1(S_n/\sigma_n,N)\leq \frac{8+2^{2+\delta}}{\sigma_n^{2+\delta}}\sum\limits_{k=1}^nE|X_k-\mu_k|^{2+\delta},
\end{equation}
which recovers the classical bound (\ref{iif}) with $C_\delta=8+2^{2+\delta}.$

Let us now investigate the vanishing third moment phenomenon for $S_n.$
\begin{prop}Suppose that $\sum\limits_{k=1}^nE[(X_k-\mu_k)^3]=0$  and $E|X_k|^{3+\delta}<\infty$ for $1\leq k\leq n$ and for some $\delta\in (0,1].$ Then, we have
\begin{align}
d_2(S_n/\sigma_n,N)&\leq \frac{C_\delta}{\sigma_n^{3+\delta}} \sum\limits_{k=1}^nE|X_k-\mu_k|^{3+\delta},\label{taggg8}
\end{align}
where $C_\delta:=8+2^{2+\delta}\big(3^{\frac{\delta}{3}}(2+3^{\frac{3+\delta}{3}})\big)^{\frac{3}{3+\delta}}+2^{4+\delta}.$
\end{prop}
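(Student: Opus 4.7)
The plan is to apply Theorem \ref{ltser4a} to $F = S_n$, for which the moment hypothesis $E[F^3] = 0$ must be verified. Expanding $S_n^3$, every mixed expectation vanishes by the independence of the $X_k$ together with the centering $E[X_k - \mu_k] = 0$, leaving $E[S_n^3] = \sum_{k=1}^n E[(X_k - \mu_k)^3]$, which is zero by assumption.

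I would then recycle the computations already performed in the derivation of (\ref{thuthuy01}): for $F = S_n$ one has $\mathfrak{D}_k S_n = \mathfrak{D}_k^{(\alpha)} S_n = X_k - \mu_k$ and $\mathfrak{D}_k Z^{(\alpha)} = \mathfrak{D}_k^{(\beta)} Z^{(\alpha)} = (X_k - \mu_k)^2 - v_k^2$, independently of $\alpha, \beta \in [0,1]$. Plugging these into the definition of $Z^{(\alpha,\beta)}$ and using $E_k|\mathfrak{D}_k S_n|^2 = v_k^2$ yields
$$Z^{(\alpha,\beta)} = \frac{1}{2}\sum_{j=1}^n (X_j - \mu_j)^3 - \frac{3}{2}\sum_{j=1}^n v_j^2 (X_j - \mu_j),$$
and since each summand depends only on a single $X_j$, the operator $\mathfrak{D}_k$ isolates the $k$th term:
$$\mathfrak{D}_k Z^{(\alpha,\beta)} = \frac{1}{2}\bigl[(X_k-\mu_k)^3 - E(X_k-\mu_k)^3\bigr] - \frac{3}{2}\, v_k^2\,(X_k - \mu_k).$$

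Next I would apply the factored bound (\ref{kofd44}) and treat its three summands separately. The third is already $\sum_k E|X_k-\mu_k|^{3+\delta}$. For the second, $|\mathfrak{D}_k Z^{(\alpha)}|^{(3+\delta)/2} = |(X_k-\mu_k)^2 - v_k^2|^{(3+\delta)/2} \leq 2^{(1+\delta)/2}(|X_k-\mu_k|^{3+\delta} + v_k^{3+\delta})$ by $(a+b)^p \leq 2^{p-1}(a^p+b^p)$, and Lyapunov's inequality gives $v_k^{3+\delta} \leq E|X_k-\mu_k|^{3+\delta}$; raising the resulting sum to the power $2/(3+\delta)$ and combining with the accompanying $(\sum_k E|X_k-\mu_k|^{3+\delta})^{(1+\delta)/(3+\delta)}$ factor collapses to a constant multiple of $\sum_k E|X_k-\mu_k|^{3+\delta}$.

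The only delicate summand is the first, which needs a bound on $\sum_k E|\mathfrak{D}_k Z^{(\alpha,\beta)}|^{(3+\delta)/3}$. With $p = (3+\delta)/3$ and the three-term triangle bound above on $|\mathfrak{D}_k Z^{(\alpha,\beta)}|$, I would apply $(a+b+c)^p \leq 3^{p-1}(a^p+b^p+c^p)$ and then convert every residual lower-moment term into $E|X_k-\mu_k|^{3+\delta}$ by Lyapunov: $(E|X_k-\mu_k|^3)^p \leq E|X_k-\mu_k|^{3+\delta}$, while $v_k^{2p} \leq (E|X_k-\mu_k|^{3+\delta})^{2/3}$ and $E|X_k-\mu_k|^p \leq (E|X_k-\mu_k|^{3+\delta})^{1/3}$ combine into $v_k^{2p} E|X_k-\mu_k|^p \leq E|X_k-\mu_k|^{3+\delta}$. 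Summing in $k$, raising to the $3/(3+\delta)$ power, and multiplying by the remaining $(\sum_k E|X_k-\mu_k|^{3+\delta})^{\delta/(3+\delta)}$ factor from (\ref{kofd44}) again collapses everything to $\sum_k E|X_k-\mu_k|^{3+\delta}$. Adding the three contributions produces (\ref{taggg8}); the whole difficulty reduces to the bookkeeping of the exponents arising from these repeated $c_r$- and Lyapunov-type estimates.
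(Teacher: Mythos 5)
Your proposal follows essentially the same path as the paper: you compute $Z^{(\alpha,\beta)}$ and $\mathfrak{D}_k Z^{(\alpha,\beta)}$ in exactly the same form, invoke the factored bound (\ref{kofd44}), and handle the three addends with the same combination of $c_r$- and Lyapunov-type estimates. The only cosmetic difference is that you verify $E[S_n^3]=0$ by direct expansion while the paper uses the identity $E[F^3]=2E[Z^{(\alpha,\beta)}]$ from Lemma \ref{fo02}; both are correct.
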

\begin{proof}We have
\begin{align*}
Z^{(\alpha,\beta)}&=\sum\limits_{k=1}^n((X_k-\mu_k)^2-v_k^2)(X_k-\mu_k)-\frac{1}{2}\sum\limits_{k=1}^n((X_k-\mu_k)^2+v_k^2)(X_k-\mu_k)\\
&=\frac{1}{2}\sum\limits_{k=1}^n((X_k-\mu_k)^3-3v_k^2(X_k-\mu_k)),
\end{align*}
and hence,
$$\mathfrak{D}_kZ^{(\alpha,\beta)}=\frac{(X_k-\mu_k)^3-3v_k^2(X_k-\mu_k)-E[(X_k-\mu_k)^3]}{2},\,\,1\leq k\leq n.$$
So we can get
\begin{align*}
\sum\limits_{k=1}^nE|\mathfrak{D}_kZ^{(\alpha,\beta)}|^{\frac{3+\delta}{3}}
&=\frac{1}{2^{\frac{3+\delta}{3}}}\sum\limits_{k=1}^nE|(X_k-\mu_k)^3-3v_k^2(X_k-\mu_k)-E[(X_k-\mu_k)^3]|^{\frac{3+\delta}{3}}\\
&\leq \frac{3^{\frac{\delta}{3}}(2+3^{\frac{3+\delta}{3}})}{2^{\frac{3+\delta}{3}}}\sum\limits_{k=1}^nE|X_k-\mu_k|^{3+\delta}.
\end{align*}
We also have
\begin{align*}
\sum\limits_{k=1}^nE|\mathfrak{D}_kZ^{(\alpha)}|^{\frac{3+\delta}{2}}&=\sum\limits_{k=1}^nE|(X_k-\mu_k)^2-v_k^2|^{\frac{3+\delta}{2}}\leq 2^{\frac{3+\delta}{2}}\sum\limits_{k=1}^nE|X_k-\mu_k|^{3+\delta}.
\end{align*}
Since $E[F^3]=2E[Z^{(\alpha,\beta)}]=\sum\limits_{k=1}^nE[(X_k-\mu_k)^3]=0,$ this allows us to use the bound (\ref{kofd44}) and we obtain (\ref{taggg8}).
\end{proof}
Clearly, when the random variables $X_k's$ have the same distribution with mean zero and variance one, the bound (\ref{taggg8}) becomes $d_2(\bar{S}_n,N)\leq C_\delta E|X_1|^{3+\delta} n^{-\frac{1+\delta}{2}}.$ This is a generalization of Proposition \ref{jjggk4} because $d_{\mathcal{H}_4}\leq d_2.$

\subsection{A sum of dependent random variables}
Fix an integer number $m\geq 1$ and let $X_1,...,X_{n+m-1}$ be independent random variables taking values in $\mathcal{X}.$ Let $\xi_i:\mathcal{X}^m\to \mathbb{R},1\leq i\leq n$ are measurable functions. In this section, we generalize the classical Lyapunov bound (\ref{iif}) to the following sum of dependent random variables
\begin{equation}\label{hh3h}
F=\sum\limits_{i=1}^n(\xi_i-E[\xi_i]),
\end{equation}
where $\xi_i:=\xi_i(X_i,...,X_{i+m-1}),1\leq i\leq n.$ We note that the run and scan statistics are two important  examples of the form (\ref{hh3h}), see e.g. \cite{Balakrishnan2002}.
\begin{prop}\label{fjer}We consider the nonlinear statistic $F$ defined by (\ref{hh3h}).

\noindent{\bf I.} Assume that $E|\xi_i|^{2+\delta}<\infty$ for $1\leq i\leq n$ and for some $\delta\in (0,1].$ Then, we have
\begin{align}
d_1(F/\sigma,N)
&\leq \frac{c_{m,\delta}}{\sigma^{2+\delta}}\sum\limits_{k=1}^nE|\xi_k-E[\xi_k]|^{2+\delta}.\label{oo2}
\end{align}
where $c_{m,\delta}:=(2m)^{2+\delta}\big(8(2m-1)+2^{2+\delta}\big).$

\noindent{\bf II.} Assume that $E[F^3]=0$ and $E|\xi_i|^{3+\delta}<\infty$ for $1\leq i\leq n$ and for some $\delta\in (0,1].$ Then, we have
\begin{equation}\label{kfl5}
d_2(F/\sigma,N)\leq \frac{C_{m,\delta}}{\sigma^{3+\delta}}\sum\limits_{k=1}^nE|\xi_k-E[\xi_k]|^{3+\delta},
\end{equation}
where $C_{m,\delta}:=(2m)^{3+\delta}\big(16(4m-2)^2+2^{3+\delta}(4m-2)+2^{3+\delta}\big).$
\end{prop}

\begin{proof}\noindent{\bf I.} In Theorem \ref{lesser4}, we choose to use $\alpha=1.$  Then, the bound (\ref{8je4ab}) gives us
\begin{align}d_1(F/\sigma,N)&\leq\frac{2^{2+\delta}}{\sigma^{2+\delta}} \sum\limits_{k=1}^{n+m-1} E|\mathfrak{D}_kF|^{2+\delta}\notag\\
&+ \frac{4}{\sigma^{2+\delta}} \bigg(\sum\limits_{k=1}^{n+m-1}E|\mathfrak{D}_kF|^{2+\delta}\bigg)^{\frac{\delta}{2+\delta}}
\bigg(\sum\limits_{k=1}^{n+m-1}E|\mathfrak{D}_kZ^{(\alpha)}|^{\frac{2+\delta}{2}}\bigg)^{\frac{2}{2+\delta}},\label{j7l1}
\end{align}
where $Z^{(\alpha)}:=\sum\limits_{l=1}^{n+m-1} \mathfrak{D}_lFE[\mathfrak{D}_lF|\mathcal{F}_l].$

We observe that $\mathfrak{D}_k\xi_i=0$ if $\xi_i$ does not depends on $X_k.$ Hence, using the convention $\xi_i=0$ if $i\leq 0$ or $i\geq n+1,$ we obtain
\begin{equation}\label{ddf4}
\mathfrak{D}_kF=\sum\limits_{i=k-m+1}^k\mathfrak{D}_k\xi_i,\,\,1\leq k\leq n+m-1.
\end{equation}
Then, we can get
$$\mathfrak{D}_kZ^{(\alpha)}=\sum\limits_{l=k-m+1}^{k+m-1} \mathfrak{D}_k(\mathfrak{D}_lFE[\mathfrak{D}_lF|\mathcal{F}_l]),\,\,1\leq k\leq n+m-1$$
with the convention $\mathfrak{D}_lF=0$ if $l\leq 0.$ By the fundamental inequality $(|a_1|+...+|a_N|)^m\leq N^{m-1}(|a_1|^m+...+|a_N|^m)$ and the point $(i)$ of Proposition \ref{77h4}, we deduce
\begin{align*}
E|D_kZ^{(\alpha)}|^{\frac{2+\delta}{2}}&\leq (2m-1)^{\frac{\delta}{2}}\sum\limits_{l=k-m+1}^{k+m-1} E|\mathfrak{D}_k(\mathfrak{D}_lFE[\mathfrak{D}_lF|\mathcal{F}_l])|^{\frac{2+\delta}{2}}\\
&\leq (2m-1)^{\frac{\delta}{2}}\sum\limits_{l=k-m+1}^{k+m-1} 2^{\frac{2+\delta}{2}}E|\mathfrak{D}_lFE[\mathfrak{D}_lF|\mathcal{F}_l]|^{\frac{2+\delta}{2}},\,\,1\leq k\leq n+m-1.
\end{align*}
Consequently,
$$\sum\limits_{k=1}^{n+m-1}E|D_kZ^{(\alpha)}|^{\frac{2+\delta}{2}} \leq (2m-1)^{1+\frac{\delta}{2}}\sum\limits_{k=1}^{n+m-1} 2^{\frac{2+\delta}{2}}E|\mathfrak{D}_kFE[\mathfrak{D}_kF|\mathcal{F}_k]|^{\frac{2+\delta}{2}}.$$
By using the Lyapunov and H\"older inequalities, we get $E|\mathfrak{D}_kFE[\mathfrak{D}_kF|\mathcal{F}_k]|^{\frac{2+\delta}{2}}\leq E|\mathfrak{D}_kF|^{2+\delta}.$ So it holds that
\begin{equation}\label{7j4r}
\sum\limits_{k=1}^{n+m-1}E|D_kZ^{(\alpha)}|^{\frac{2+\delta}{2}}\leq (4m-2)^{\frac{2+\delta}{2}}\sum\limits_{k=1}^{n+m-1} E|\mathfrak{D}_kF|^{2+\delta}.
\end{equation}
Inserting this relation into (\ref{j7l1}) yields
\begin{align*}
d_1(F/\sigma,N)&\leq \frac{8(2m-1)+2^{2+\delta}}{\sigma^{2+\delta}} \sum\limits_{k=1}^n E|\mathfrak{D}_kF|^{2+\delta}.
\end{align*}
Furthermore, from (\ref{ddf4}) and the point $(i)$ of Proposition \ref{77h4}, we deduce
\begin{align*}
E|\mathfrak{D}_kF|^{2+\delta}&\leq m^{1+\delta}\sum\limits_{i=k-m+1}^kE|\mathfrak{D}_k\xi_i|^{2+\delta}=m^{1+\delta}\sum\limits_{i=k-m+1}^kE|\mathfrak{D}_k(\xi_i-E[\xi_i])|^{2+\delta}\\
&\leq 2^{2+\delta}m^{1+\delta}\sum\limits_{i=k-m+1}^kE|\xi_i-E[\xi_i]|^{2+\delta},\,\,1\leq k\leq n+m-1,
\end{align*}
and hence,
$$\sum\limits_{k=1}^{n+m-1}E|\mathfrak{D}_kF|^{2+\delta}\leq 2^{2+\delta}m^{2+\delta}\sum\limits_{k=1}^{n}E|\xi_k-E[\xi_k]|^{2+\delta}.$$
So (\ref{oo2}) follows.

\noindent{\bf II.} Choosing $\alpha=\beta=1,$ the bound (\ref{kofd44}) gives us
\begin{align}
&d_2(F/\sigma,N)\leq \frac{8}{\sigma^{3+\delta}}\big(\sum\limits_{k=1}^{n+m-1}E|\mathfrak{D}_kF|^{3+\delta}\big)^{\frac{\delta}{3+\delta}}
\big(\sum\limits_{k=1}^{n+m-1}E|\mathfrak{D}_kZ^{(\alpha,\beta)}|^{\frac{3+\delta}{3}}\big)^{\frac{3}{3+\delta}}\notag\\
&+\frac{2^{3+\delta}}{\sigma^{3+\delta}}\big(\sum\limits_{k=1}^{n+m-1}E|\mathfrak{D}_kF|^{3+\delta}\big)^{\frac{1+\delta}{3+\delta}}
\big(\sum\limits_{k=1}^{n+m-1}E|\mathfrak{D}_kZ^{(\alpha)}|^{\frac{3+\delta}{2}}\big)^{\frac{2}{3+\delta}}
+\frac{2^{3+\delta}}{\sigma^{3+\delta}}\sum\limits_{k=1}^{n+m-1}E|\mathfrak{D}_kF|^{3+\delta},\label{j1qpe}
\end{align}
where $Z^{(\alpha)}$ is as in the part {\bf I} and
$$Z^{(\alpha,\beta)}:=\sum\limits_{k=1}^n\mathfrak{D}_kFE[\mathfrak{D}_kZ^{(\alpha)}|\mathcal{F}_k]-\frac{1}{2}\sum\limits_{k=1}^n(|\mathfrak{D}_kF|^2
+E_k|\mathfrak{D}_kF|^2)E[\mathfrak{D}_kF|\mathcal{F}_k].$$
Using the same arguments as in the proof of (\ref{7j4r}) we obtain
\begin{equation}\label{7j4rr}\sum\limits_{k=1}^{n+m-1}E|D_kZ^{(\alpha)}|^{\frac{3+\delta}{2}} \leq (4m-2)^{\frac{3+\delta}{2}}\sum\limits_{k=1}^{n+m-1} E|\mathfrak{D}_kF|^{3+\delta}.
\end{equation}
On the other hand, we have, for $1\leq k\leq n+m-1,$
$$\mathfrak{D}_kZ^{(\alpha,\beta)}=\sum\limits_{l=k-m+1}^{k+m-1} \mathfrak{D}_k\left(\mathfrak{D}_lFE[\mathfrak{D}_lZ^{(\alpha)}|\mathcal{F}_l]\right)-\frac{1}{2}\sum\limits_{l=k-m+1}^{k+m-1}\mathfrak{D}_k\left( (|\mathfrak{D}_lF|^2
+E_l|\mathfrak{D}_lF|^2)E[\mathfrak{D}_lF|\mathcal{F}_l]\right).$$
So we can deduce
\begin{align*}
E|\mathfrak{D}_kZ^{(\alpha,\beta)}|^{\frac{3+\delta}{3}}&\leq (4m-2)^{\frac{\delta}{3}}2^{\frac{3+\delta}{3}}\bigg(\sum\limits_{l=k-m+1}^{k+m-1} E|\mathfrak{D}_lFE[\mathfrak{D}_lZ^{(\alpha)}|\mathcal{F}_l]|^{\frac{3+\delta}{3}}\\
&+\frac{1}{2^{\frac{3+\delta}{3}}}\sum\limits_{l=k-m+1}^{k+m-1}E|(|\mathfrak{D}_lF|^2
+E_l|\mathfrak{D}_lF|^2)E[\mathfrak{D}_lF|\mathcal{F}_l]|^{\frac{3+\delta}{3}}\bigg)\\
&\leq (4m-2)^{\frac{\delta}{3}}\bigg(2^{\frac{3+\delta}{3}}\sum\limits_{l=k-m+1}^{k+m-1} E|\mathfrak{D}_lFE[\mathfrak{D}_lZ^{(\alpha)}|\mathcal{F}_l]|^{\frac{3+\delta}{3}}+\sum\limits_{l=k-m+1}^{k+m-1}E|\mathfrak{D}_lF|^{3+\delta}\bigg).
\end{align*}
We now observe that
\begin{align*}E|\mathfrak{D}_lFE[\mathfrak{D}_lZ^{(\alpha)}|\mathcal{F}_l]|^{\frac{3+\delta}{3}}&\leq \frac{E|\mathfrak{D}_lF|^{3+\delta}+2E|E[\mathfrak{D}_lZ^{(\alpha)}|\mathcal{F}_l]|^{\frac{3+\delta}{2}}}{3}\\
&\leq \frac{E|\mathfrak{D}_lF|^{3+\delta}+2E|\mathfrak{D}_lZ^{(\alpha)}|^{\frac{3+\delta}{2}}}{3},
\end{align*}
which, in turn, implies that
\begin{align*}
E|\mathfrak{D}_kZ^{(\alpha,\beta)}|^{\frac{3+\delta}{3}}&\leq (4m-2)^{\frac{\delta}{3}}2^{\frac{3+\delta}{3}}\bigg(\sum\limits_{l=k-m+1}^{k+m-1} E|\mathfrak{D}_lZ^{(\alpha)}|^{\frac{3+\delta}{2}}+\sum\limits_{l=k-m+1}^{k+m-1}E|\mathfrak{D}_lF|^{3+\delta}\bigg),
\end{align*}
We therefore obtain
\begin{align}
&\sum\limits_{k=1}^{n+m-1}E|\mathfrak{D}_kZ^{(\alpha,\beta)}|^{\frac{3+\delta}{3}}\notag\\
&\leq (4m-2)^{\frac{\delta}{3}}2^{\frac{3+\delta}{3}}\bigg((2m-1)\sum\limits_{k=1}^{n+m-1} E|\mathfrak{D}_kZ^{(\alpha)}|^{\frac{3+\delta}{2}}+(2m-1)\sum\limits_{k=1}^{n+m-1}E|\mathfrak{D}_kF|^{3+\delta}\bigg)\notag\\
&\leq \frac{1}{2}(8m-4)^{\frac{3+\delta}{3}}\big((4m-2)^{\frac{3+\delta}{2}}+1\big)
\sum\limits_{k=1}^{n+m-1}E|\mathfrak{D}_kF|^{3+\delta}\notag\\
&\leq (8m-4)^{\frac{3+\delta}{3}}(4m-2)^{\frac{3+\delta}{2}}
\sum\limits_{k=1}^{n+m-1}E|\mathfrak{D}_kF|^{3+\delta}.\label{j1qper}
\end{align}
Inserting (\ref{7j4rr}) and (\ref{j1qper}) into (\ref{j1qpe}) we obtain the bound (\ref{kfl5}) because
$$\sum\limits_{k=1}^{n+m-1}E|\mathfrak{D}_kF|^{3+\delta}\leq 2^{3+\delta}m^{3+\delta}\sum\limits_{k=1}^{n}E|\xi_k-E[\xi_k]|^{3+\delta}.$$
The proof of Proposition is complete.
\end{proof}
\begin{rem} In the proof of Proposition \ref{fjer}, we used $\alpha=\beta=1.$ The reader can verify that the other choices of $\alpha$ and $\beta$ give us similar bounds to (\ref{oo2}) and (\ref{kfl5}), the value of constants $c_{m,\delta}$ and $C_{m,\delta}$ may vary.
\end{rem}

\begin{exam}Let $X,X_1,X_2,...$ be independent and identically distributed $\mathbb{R}$-valued random variables with zero mean and unit variance. We consider the sequence of $m$-runs defined by
$$F_n:=n^{-1/2}\sum\limits_{i=1}^nX_i...X_{i+m-1},\,\,n\geq 1.$$
It is easy to see that $E[F_n]=0$ and $Var(F_n)=1.$ Hence, if $E|X|^{2+\delta}<\infty,$  then the bound (\ref{oo2}) gives us
$$d_1(F_n,N)\leq c_{m,\delta}(E|X|^{2+\delta})^mn^{-\frac{\delta}{2}}.$$
If $E[X^3]=0$ and $E|X|^{3+\delta}<\infty,$ then the bound (\ref{kfl5}) gives us
$$d_2(F_n,N)\leq C_{m,\delta}(E|X|^{3+\delta})^mn^{-\frac{1+\delta}{2}}.$$
\end{exam}

\subsection{CLT for quadratic forms without finite fourth moment}
Let $X_1,X_2,...$ be independent $\mathbb{R}$-valued random variables with zero means, unit variances and $A=(a^{(n)}_{uv})_{u,v=1}^n$  be a symmetric matrix with vanishing diagonal, where each $a^{(n)}_{uv}$ is a real number depending on $n.$ For the simplicity of notations, we will write $a_{uv}$ instead of $a^{(n)}_{uv}.$ The central limit theorem and normal approximation results for the quadratic form
$$F_n=\sum\limits_{1\leq u\leq v\leq n} a_{uv}X_uX_v$$
has been extensively discussed in the literature. The most of works require the finite fourth moment condition, i.e. $E|X_k|^4<\infty,k\geq 1.$ The best known result proved by de Jong \cite{deJong1987} tells us that the $F_n/\sigma_n$ converges to a standard normal random variable in distribution if
$$\text{$\sigma_n^{-4}\mathrm{Tr}(A^4)\to 0$ and $\tilde{L}_n:=\sigma_n^{-2}\max\limits_{1\leq u\leq n}\sum\limits_{v=1}^na_{uv}^2\to 0$},$$
where $\sigma^2_n:=Var(F_n)=\sum\limits_{1\leq u\leq v\leq n} a_{uv}^2$ and $\mathrm{Tr}(A^4)=\sum\limits_{u,v=1}^n \big(\sum\limits_{k=1}^na_{ku} a_{kv}\big)^2.$ Also see \cite{Dobler2017b,Shao2019} for the rates of convergence obtained there.

Here, in the next Proposition, we only require the random variables $X_k's$ to have the finite absolute moment of order $2+\delta.$ This is a significant supplement to the literature.
\begin{prop}\label{kfm4}
Assume that
\begin{equation}\label{o2a}
\lim\limits_{n\to \infty}\tilde{L}_n=0
\end{equation}
and
\begin{equation}\label{o1a}
\sup\limits_{n\geq 1}\frac{\tilde{L}_n^{\frac{\delta(\delta-\varepsilon)}{4}}}{\sigma_n^{2+\delta}}\left(\sum\limits_{u,v=1}^n\big|\sum\limits_{k=1}^na_{ku} a_{kv}\big|^{\frac{2+\delta}{2}}+\sum\limits_{u,v=1}^n\sum\limits_{k=1}^n|a_{ku} a_{kv}|^{\frac{2+\delta}{2}}\right)<\infty,
\end{equation}
for some $\varepsilon\in(0,\delta].$ Then, $F_n/\sigma_n$ converges in distribution to a standard normal random variable as $n\to\infty.$ Moreover, we have
\begin{align}
&d_1(F_n/\sigma_n,N)\leq\frac{C_\delta}{\sigma_n^{2+\delta}}  \big(\max\limits_{1\leq w\leq n}E|X_w|^{2+\delta}\big)^2\notag\\
&\times\bigg\{\bigg(\sum\limits_{u=1}^n\big(\sum\limits_{v=1}^n a^2_{uv}\big)^{\frac{2+\delta}{2}}\bigg)^{\frac{\delta}{2+\delta}}\bigg(\sum\limits_{u,v=1}^n\big|\sum\limits_{k=1}^na_{ku} a_{kv}\big|^{\frac{2+\delta}{2}}+\sum\limits_{u,v=1}^n\sum\limits_{k=1}^n |a_{ku} a_{kv}|^{\frac{2+\delta}{2}}\bigg)^{\frac{2}{2+\delta}}\notag\\
&\hspace{9cm}+\sum\limits_{u=1}^n\big(\sum\limits_{v=1}^n a^2_{uv}\big)^{\frac{2+\delta}{2}}\bigg\},\label{quad1}
\end{align}
where $C_\delta$ is a positive constant depending only on $\delta.$
\end{prop}
\begin{proof}We first use the bound (\ref{8je4ab}) to prove (\ref{quad1}). We have
$$\mathfrak{D}_kF_n=X_k\sum\limits_{u=1}^n a_{ku}X_u,\,\,1\leq k\leq n.$$
We choose to use $\alpha=\frac{1}{2}.$ Then, we obtain
$$\mathfrak{D}^{(\alpha)}_kF_n=\frac{1}{2}X_k\sum\limits_{u=1}^n a_{ku}X_u,\,\,1\leq k\leq n$$
and
$$Z^{(\alpha)}=\frac{1}{2}\sum\limits_{k=1}^nX^2_k\big(\sum\limits_{v=1}^n a_{kv}X_v\big)^2=\frac{1}{2}\sum\limits_{k=1}^nZ^{(\alpha)}_k,$$
where
$$Z^{(\alpha)}_k:=X^2_k\big(\sum\limits_{u=1}^n a_{ku}X_u\big)^2,\,\,1\leq k\leq n.$$
We observe that $\mathfrak{D}_l Z^{(\alpha)}_k=(X^2_k-1)\big(\sum\limits_{u=1}^n a_{ku}X_u\big)^2$ if $l=k$ and for $l\neq k,$
\begin{align*}
\mathfrak{D}_l Z^{(\alpha)}_k&=X^2_k\bigg(a^{2}_{kl}(X^2_l-1)+2a_{kl}X_l\sum\limits_{v=1,v\neq l}^n a_{kv}X_v\bigg).
\end{align*}
Hence, we obtain
\begin{align*}
\mathfrak{D}_l Z^{(\alpha)}
&=\frac{1}{2}(X^2_l-1)\big(\sum\limits_{u=1}^n a_{lu}X_u\big)^2\\
&+\frac{1}{2}(X^2_l-1)\sum\limits_{k=1}^n a^{2}_{kl}X^2_k+X_l\sum\limits_{v=1,v\neq l}^n\big(\sum\limits_{k=1}^na_{kl} a_{kv}X^2_k\big)X_v,\,\,1\leq l\leq n.
\end{align*}
It follows from the fundamental inequality $(|a_1|+|a_2|+|a_3|)^m\leq 3^{m-1}(|a_1|^m+|a_2|+|a_3|^m)$ that
\begin{align}
E|\mathfrak{D}_l Z^{(\alpha)}|^{\frac{2+\delta}{2}}\leq \frac{3^{\frac{\delta}{2}}}{2^{\frac{2+\delta}{2}}}\bigg(E|X^2_l-&1|^{\frac{2+\delta}{2}}E\big|\sum\limits_{u=1}^n a_{lu}X_u\big|^{2+\delta}+ E|X^2_l-1|^{\frac{2+\delta}{2}}E\big|\sum\limits_{k=1}^n a^{2}_{kl}X^2_k\big|^{\frac{2+\delta}{2}}\notag\\
&+2^{\frac{2+\delta}{2}}E|X_l|^{\frac{2+\delta}{2}}E\big|\sum\limits_{v=1,v\neq l}^n\big(\sum\limits_{k=1}^na_{kl} a_{kv}X^2_k\big)X_v\big|^{\frac{2+\delta}{2}}\bigg).\label{bgj3}
\end{align}
By the inequalities (\ref{9hj4}) and (\ref{k1h5}) below we deduce
$$E\big|\sum\limits_{u=1}^n a_{lu}X_u\big|^{2+\delta}\leq (1+\delta)^{\frac{2+\delta}{2}}\max\limits_{1\leq u\leq n}E|X_u|^{2+\delta}\big(\sum\limits_{u=1}^n a^2_{lu}\big)^{\frac{2+\delta}{2}}$$
and
\begin{align*}
E\big|\sum\limits_{k=1}^n a^{2}_{kl}X^2_k\big|^{\frac{2+\delta}{2}}&\leq \big(\sum\limits_{k=1}^n a^2_{kl}\big)^{\frac{2+\delta}{2}}+4\sum\limits_{k=1}^n E|a^{2}_{kl}(X^2_k-1)|^{\frac{2+\delta}{2}}\\
&\leq \big(\sum\limits_{k=1}^n a^2_{kl}\big)^{\frac{2+\delta}{2}}+4\max\limits_{1\leq k\leq n}E|X^2_k-1|^{\frac{2+\delta}{2}}\sum\limits_{k=1}^n |a_{kl}|^{2+\delta}\\
&\leq 9\times 2^{\frac{\delta}{2}}\max\limits_{1\leq k\leq n}E|X_k|^{2+\delta}\big(\sum\limits_{k=1}^n a^2_{kl}\big)^{\frac{2+\delta}{2}}.
\end{align*}
To estimate the third addend in the right hand side of (\ref{bgj3}), we put
$$W:=\sum\limits_{v=1,v\neq l}^n\big(\sum\limits_{k=1}^na_{kl} a_{kv}X^2_k\big)X_v.$$
We have $E[W]=0$ and
$$\mathfrak{D}_wW=\big(\sum\limits_{k=1}^na_{kl} a_{kw}X^2_k\big)X_w+\sum\limits_{v=1,v\neq l,w}^na_{wl} a_{wv}(X^2_w-1)X_v,\,\,1\leq w\leq n.$$
Hence, by the inequality (\ref{k1h5}), we obtain
\begin{align*}
E\big|&\sum\limits_{v=1,v\neq l}^n\big(\sum\limits_{k=1}^na_{kl}a_{kv}X^2_k\big)X_v\big|^{\frac{2+\delta}{2}}=E|W|^{\frac{2+\delta}{2}}\leq4\sum\limits_{w=1}^nE|\mathfrak{D}_wW|^{\frac{2+\delta}{2}}\\
&\leq 2^{2+\frac{\delta}{2}}\sum\limits_{w=1}^nE|\big(\sum\limits_{k=1}^na_{kl} a_{kw}X^2_k\big)X_w|^{\frac{2+\delta}{2}}+2^{2+\frac{\delta}{2}}\sum\limits_{w=1}^nE\big|\sum\limits_{v=1,v\neq l,w}^na_{wl} a_{wv}(X^2_w-1)X_v\big|^{\frac{2+\delta}{2}}\\
&\leq 2^{2+\frac{\delta}{2}}\max\limits_{1\leq w\leq n}E|X_w|^{\frac{2+\delta}{2}}\sum\limits_{w=1}^nE\big|\sum\limits_{k=1}^na_{kl} a_{kw}X^2_k\big|^{\frac{2+\delta}{2}}\\
&+2^{2+\frac{\delta}{2}}\max\limits_{1\leq w\leq n}E|X^2_w-1|^{\frac{2+\delta}{2}}\sum\limits_{w=1}^nE\big|\sum\limits_{v=1,v\neq l,w}^na_{wl} a_{wv}X_v\big|^{\frac{2+\delta}{2}}.
\end{align*}
Once again, we use the inequality (\ref{k1h5}) to get
\begin{align*}
&2^{2+\frac{\delta}{2}}\max\limits_{1\leq w\leq n}E|X_w|^{\frac{2+\delta}{2}}\sum\limits_{w=1}^nE\big|\sum\limits_{k=1}^na_{kl} a_{kw}X^2_k\big|^{\frac{2+\delta}{2}}\\
&\leq 2^{2+\frac{\delta}{2}}\max\limits_{1\leq w\leq n}E|X_w|^{\frac{2+\delta}{2}}\bigg(\sum\limits_{w=1}^n\big|\sum\limits_{k=1}^na_{kl} a_{kw}\big|^{\frac{2+\delta}{2}}+4\sum\limits_{w=1}^n\sum\limits_{k=1}^nE\big|a_{kl} a_{kw}(X^2_k-1)\big|^{\frac{2+\delta}{2}}\bigg)\\
&\leq 2^{2+\frac{\delta}{2}}\max\limits_{1\leq w\leq n}E|X_w|^{\frac{2+\delta}{2}}\bigg(\sum\limits_{w=1}^n\big|\sum\limits_{k=1}^na_{kl} a_{kw}\big|^{\frac{2+\delta}{2}}+4\max\limits_{1\leq k\leq n}E|X_k^2-1|^{\frac{2+\delta}{2}}\sum\limits_{w=1}^n\sum\limits_{k=1}^n|a_{kl} a_{kw}|^{\frac{2+\delta}{2}}\bigg)
\end{align*}
and
\begin{align*}
&2^{2+\frac{\delta}{2}}\max\limits_{1\leq w\leq n}E|X^2_w-1|^{\frac{2+\delta}{2}}\sum\limits_{w=1}^nE\big|\sum\limits_{v=1,v\neq l,w}^na_{wl} a_{wv}X_v\big|^{\frac{2+\delta}{2}}\\
&\leq 2^{4+\frac{\delta}{2}}\max\limits_{1\leq w\leq n}E|X^2_w-1|^{\frac{2+\delta}{2}}\sum\limits_{w=1}^n\sum\limits_{v=1,v\neq l,w}^nE|a_{wl} a_{wv}X_v|^{\frac{2+\delta}{2}}\\
&\leq 2^{4+\frac{\delta}{2}}\max\limits_{1\leq w\leq n}E|X^2_w-1|^{\frac{2+\delta}{2}}\max\limits_{1\leq w\leq n}E|X_w|^{\frac{2+\delta}{2}}\sum\limits_{w=1}^n\sum\limits_{v=1,v\neq l,w}^n|a_{wl} a_{wv}|^{\frac{2+\delta}{2}}\\
&\leq 2^{4+\frac{\delta}{2}}\max\limits_{1\leq w\leq n}E|X^2_w-1|^{\frac{2+\delta}{2}}\max\limits_{1\leq w\leq n}E|X_w|^{\frac{2+\delta}{2}}\sum\limits_{v=1}^n\sum\limits_{w=1}^n |a_{wl} a_{wv}|^{\frac{2+\delta}{2}}.
\end{align*}
Combining the above computations, we obtain from (\ref{bgj3}) that
\begin{align*}
E|\mathfrak{D}_l Z^{(\alpha)}|^{\frac{2+\delta}{2}}&\leq C_\delta \bigg(E|X^2_l-1|^{\frac{2+\delta}{2}}\max\limits_{1\leq u\leq n}E|X_u|^{2+\delta}\big(\sum\limits_{u=1}^n a^2_{lu}\big)^{\frac{2+\delta}{2}}\\
&+E|X_l|^{\frac{2+\delta}{2}}\max\limits_{1\leq w\leq n}E|X_w|^{\frac{2+\delta}{2}}\sum\limits_{w=1}^n\big|\sum\limits_{k=1}^na_{kl} a_{kw}\big|^{\frac{2+\delta}{2}}\\
&+E|X_l|^{\frac{2+\delta}{2}}\max\limits_{1\leq w\leq n}E|X^2_w-1|^{\frac{2+\delta}{2}}\max\limits_{1\leq w\leq n}E|X_w|^{\frac{2+\delta}{2}}\sum\limits_{v=1}^n\sum\limits_{w=1}^n |a_{wl} a_{wv}|^{\frac{2+\delta}{2}}\bigg),
\end{align*}
where $C_\delta$ is a positive constant depending only on $\delta.$ Consequently, for some $C_\delta>0,$
\begin{align}
&\sum\limits_{k=1}^nE|\mathfrak{D}_k Z^{(\alpha)}|^{\frac{2+\delta}{2}}\leq C_\delta \bigg(\max\limits_{1\leq w\leq n}E|X_w|^{2+\delta}\sum\limits_{u,v=1}^n\big|\sum\limits_{k=1}^na_{ku} a_{kv}\big|^{\frac{2+\delta}{2}}\notag\\
&+\max\limits_{1\leq w\leq n}E|X^2_w-1|^{\frac{2+\delta}{2}}\max\limits_{1\leq w\leq n}E|X_w|^{2+\delta}\bigg(\sum\limits_{u,v=1}^n\sum\limits_{k=1}^n |a_{ku} a_{kv}|^{\frac{2+\delta}{2}}+\sum\limits_{u=1}^n\big(\sum\limits_{v=1}^n a^2_{uv}\big)^{\frac{2+\delta}{2}}\bigg)\bigg)\notag\\
&\leq C_\delta \big(\max\limits_{1\leq w\leq n}E|X_w|^{2+\delta}\big)^2\bigg(\sum\limits_{u,v=1}^n\big|\sum\limits_{k=1}^na_{ku} a_{kv}\big|^{\frac{2+\delta}{2}}+\sum\limits_{u,v=1}^n\sum\limits_{k=1}^n |a_{ku} a_{kv}|^{\frac{2+\delta}{2}}+\sum\limits_{u=1}^n\big(\sum\limits_{v=1}^n a^2_{uv}\big)^{\frac{2+\delta}{2}}\bigg).\label{9ja1}
\end{align}
On the other hand, we use the inequality (\ref{9hj4}) to get
\begin{align}
E|\mathfrak{D}_kF_n|^{2+\delta}&=E|X_k|^{2+\delta}E\big|\sum\limits_{u=1}^n a_{ku}X_u\big|^{2+\delta}\notag\\
&\leq (1+\delta)^{\frac{2+\delta}{2}}\big(\max\limits_{1\leq u\leq n}E|X_u|^{2+\delta}\big)^2\big(\sum\limits_{u=1}^n a^2_{ku}\big)^{\frac{2+\delta}{2}},\,\,1\leq k\leq n,\notag
\end{align}
and hence,
\begin{equation}\label{9ja2}
\sum\limits_{k=1}^nE|\mathfrak{D}_kF_n|^{2+\delta}\leq (1+\delta)^{\frac{2+\delta}{2}}\big(\max\limits_{1\leq u\leq n}E|X_u|^{2+\delta}\big)^2\sum\limits_{u=1}^n\big(\sum\limits_{v=1}^n a^2_{uv}\big)^{\frac{2+\delta}{2}}.
\end{equation}
Inserting the estimates (\ref{9ja1}) and (\ref{9ja2}) into (\ref{8je4ab}) gives us the bound (\ref{quad1}).

To finish the proof, we observe that
$$\frac{1}{\sigma_n^{2+\delta}}\sum\limits_{u=1}^n\big(\sum\limits_{v=1}^n a^2_{uv}\big)^{\frac{2+\delta}{2}}\leq \frac{1}{\sigma_n^{\delta}}\max\limits_{1\leq u\leq n}\big(\sum\limits_{v=1}^na_{uv}^2\big)^{\frac{\delta}{2}}=\tilde{L}^{\frac{\delta}{2}}_n.$$
Hence, the conditions (\ref{o2a}) and (\ref{o1a}) ensure that $d_1(F_n/\sigma_n,N)\to 0$ as $n\to \infty.$ So $F_n/\sigma_n$ converges in distribution to $N.$

The proof of Proposition is complete.
\end{proof}
\begin{rem}In the proof of Proposition \ref{kfm4}, we used $\alpha=\frac{1}{2}$ because we want to obtain similar conclusions as in \cite{deJong1987}. If we choose to use $\alpha=1,$ then the bound (\ref{quad1}) depends on $\sum\limits_{u,v=1}^n\big|\sum\limits_{k=u}^na_{ku} a_{kv}\big|^{\frac{2+\delta}{2}}$ instead of $\sum\limits_{u,v=1}^n\big|\sum\limits_{k=1}^na_{ku} a_{kv}\big|^{\frac{2+\delta}{2}}.$
\end{rem}

\section{Proofs of the main results}\label{8i2}
Our proof will repeatedly use the following covariance  formula, see Proposition 2.3 in \cite{dungnt2019}.
\begin{prop}\label{lods3} Let $U=U(X)$ and $V=V(X)$ be two random variables in $L^2(P).$ For any $\alpha\in[0,1],$ we have
\begin{equation}\label{jfmn4}
Cov(U,V)=E\left[\sum\limits_{i=1}^n \mathfrak{D}_i U \mathfrak{D}^{(\alpha)}_i V\right],
\end{equation}
where we recall that $\mathfrak{D}^{(\alpha)}_i V=\alpha E[\mathfrak{D}_iV|\mathcal{F}_i]+(1-\alpha)E[\mathfrak{D}_iV|\mathcal{G}_i].$
\end{prop}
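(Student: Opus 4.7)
The plan is to establish the identity via a forward Doob-martingale decomposition with respect to the filtration $(\mathcal{F}_i)$, to derive the symmetric identity via the reverse filtration $(\mathcal{G}_i)$, and then to take a convex combination.

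First I would treat the case $\alpha = 1$. Set $M_i := E[U\mid \mathcal{F}_i]$ and $M_i' := E[V\mid \mathcal{F}_i]$; both are square-integrable martingales since $U,V \in L^2(P)$. The key observation is that, because the $X_j$ are independent, $E_i[U]$ is a measurable function of $(X_j)_{j\neq i}$, and conditioning it on $\mathcal{F}_{i-1}$ integrates out $X_{i+1},\ldots,X_n$, giving
\begin{equation*}
E\bigl[E_i U \,\big|\, \mathcal{F}_{i-1}\bigr] \;=\; E[U\mid \mathcal{F}_{i-1}].
\end{equation*}
Consequently the martingale increment admits the compact form
\begin{equation*}
M_i - M_{i-1} \;=\; E\bigl[U - E_i U \,\big|\, \mathcal{F}_i\bigr] \;=\; E[\mathfrak{D}_i U\mid \mathcal{F}_i],
\end{equation*}
and analogously for $V$. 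Telescoping yields the orthogonal decomposition $U - E[U] = \sum_{i=1}^n E[\mathfrak{D}_i U\mid \mathcal{F}_i]$, with the differences being martingale differences hence uncorrelated.

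Next, orthogonality of martingale differences in $L^2(P)$ combined with the tower property gives
\begin{equation*}
\mathrm{Cov}(U,V) \;=\; \sum_{i=1}^n E\bigl[E[\mathfrak{D}_i U\mid \mathcal{F}_i]\,E[\mathfrak{D}_i V\mid \mathcal{F}_i]\bigr] \;=\; \sum_{i=1}^n E\bigl[\mathfrak{D}_i U\cdot E[\mathfrak{D}_i V\mid \mathcal{F}_i]\bigr],
\end{equation*}
which is exactly \eqref{jfmn4} at $\alpha = 1$. The case $\alpha = 0$ is handled symmetrically: replacing $(\mathcal{F}_i)$ by the reverse filtration $(\mathcal{G}_i)$, the same argument (now a backward Doob decomposition using $E[E_i U\mid \mathcal{G}_{i+1}] = E[U\mid \mathcal{G}_{i+1}]$) gives $\mathrm{Cov}(U,V) = \sum_i E[\mathfrak{D}_i U\cdot E[\mathfrak{D}_i V\mid \mathcal{G}_i]]$. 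Forming the convex combination $\alpha\cdot(\text{$\alpha=1$ identity}) + (1-\alpha)\cdot(\text{$\alpha=0$ identity})$ and recognising the bracket inside the sum as $\mathfrak{D}^{(\alpha)}_i V$ delivers the full formula for arbitrary $\alpha\in[0,1]$.

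The main obstacle, to my mind, is handling the subtle difference between $\mathfrak{D}_i U = U - E_i U$ and the true Doob increment $M_i - M_{i-1}$: unlike the latter, $\mathfrak{D}_i U$ is \emph{not} $\mathcal{F}_i$-measurable, since it still depends on $X_{i+1},\ldots,X_n$. The entire argument hinges on using independence at precisely the right moment to collapse $E[E_i U \mid \mathcal{F}_{i-1}]$ to $E[U\mid \mathcal{F}_{i-1}]$, after which the tower property and martingale orthogonality do the remaining work mechanically.
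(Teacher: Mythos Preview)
Your argument is correct: the forward and backward Doob decompositions $U-E[U]=\sum_i E[\mathfrak{D}_iU\mid\mathcal{F}_i]$ and $U-E[U]=\sum_i E[\mathfrak{D}_iU\mid\mathcal{G}_i]$, martingale-difference orthogonality, and the tower step $E\bigl[E[\mathfrak{D}_iU\mid\mathcal{F}_i]\,E[\mathfrak{D}_iV\mid\mathcal{F}_i]\bigr]=E\bigl[\mathfrak{D}_iU\cdot E[\mathfrak{D}_iV\mid\mathcal{F}_i]\bigr]$ combine exactly as you describe. The paper does not prove this proposition in situ but defers to Proposition~2.3 of \cite{dungnt2019}; the Appendix here (proof of the Marcinkiewicz--Zygmund inequality) explicitly invokes the same decomposition $U=EU+\sum_k E[\mathfrak{D}_kU\mid\mathcal{F}_k]$, confirming that your approach coincides with the intended one.
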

Here, we note that the condition $U,V\in L^2(P)$ can be replaced by $U\in L^p(P)$ and $V\in L^q(P)$ for some $p,q>1$ with $\frac{1}{p}+\frac{1}{q}=1.$ In particular, if $U$ is bounded, we only need $V\in L^1(P).$ This is due to the fact that, under such conditions, all expectations in (\ref{jfmn4}) exist and hence, this formula still holds true. We also note that the formula (\ref{jfmn4}) can be seen as an extension of the covariance identity on page 7 of \cite{Privault2018}.

In the proof, we also use the following notations. We let $X'=(X'_1,X'_2,...,X'_n)$ be an independent copy of $X.$ Given a random variable $U=U(X),$ for each $1\leq k\leq n,$ we write $T_kU=U(X_1,...,X_{k-1},X'_k,X_{k+1},...,X_n)$  and denote by $E'_k$ the expectation with respect to $X'_k.$

\subsection{Proof of Theorem \ref{lesser4}}
As mentioned in Introduction, the key allowing us to relax moment conditions is simple observations about the solution of Stein's equation. We have the following.
\begin{prop}\label{tt1}Given an absolutely continuous function $h,$ we consider Stein's equation
\begin{equation}\label{fklls3}
f'(x)-xf(x)=h(x)-E[h(N)],\,\,x\in \mathbb{R}.
\end{equation}
There then exists a solution to the equation (\ref{fklls3}) that satisfies, for any $\delta\in (0,1],$
\begin{equation}\label{8uw}
|f'_h(x)-f'_h(y)|\leq 2\|h'\|_\infty|x-y|^\delta\,\,\,\forall\,\,x,y\in \mathbb{R}.
\end{equation}
\end{prop}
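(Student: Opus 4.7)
My plan is to take the classical Stein solution and combine two standard sup-norm estimates via a simple interpolation. Consider the canonical solution of (\ref{fklls3}),
\[
f_h(x) = e^{x^2/2} \int_{-\infty}^{x} \bigl(h(u)-E[h(N)]\bigr) e^{-u^2/2}\,du,
\]
and rely on two classical Stein bounds: (i) $\|f'_h\|_\infty \leq \|h'\|_\infty$, and (ii) $f'_h$ is Lipschitz with constant at most $2\|h'\|_\infty$ (that is, $\|f''_h\|_\infty \leq 2\|h'\|_\infty$ in the almost-everywhere sense, since $h$ is only assumed absolutely continuous). Both are well known Stein-solution estimates for Lipschitz test functions and can be found in Lemma~2.4 of \cite{Chen2011}; estimate (i) can also be read off from the Ornstein--Uhlenbeck representation $f_h(x)=-\int_0^\infty e^{-t}E[h'(e^{-t}x+\sqrt{1-e^{-2t}}\,N)]\,dt$ after one Gaussian integration by parts, which in fact yields the sharper $\|f'_h\|_\infty\leq\sqrt{2/\pi}\,\|h'\|_\infty$.

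Given (i) and (ii), the Hölder bound (\ref{8uw}) follows by an elementary interpolation. For any $x,y\in\mathbb{R}$ I have simultaneously
\[
|f'_h(x)-f'_h(y)| \leq 2\|h'\|_\infty|x-y| \qquad\text{and}\qquad |f'_h(x)-f'_h(y)| \leq 2\|f'_h\|_\infty \leq 2\|h'\|_\infty.
\]
Setting $V:=|f'_h(x)-f'_h(y)|\geq 0$ and writing $V=V^\delta\cdot V^{1-\delta}$ for $\delta\in(0,1]$, the two bounds give
\[
V \leq \bigl(2\|h'\|_\infty|x-y|\bigr)^\delta \bigl(2\|h'\|_\infty\bigr)^{1-\delta} = 2\|h'\|_\infty\,|x-y|^\delta,
\]
which is exactly (\ref{8uw}). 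An equivalent route is a case split on whether $|x-y|\leq 1$ or $|x-y|>1$: the Lipschitz bound is the sharper one in the first case (since $|x-y|\leq |x-y|^\delta$), while the sup-norm bound is sharper in the second (since $|x-y|^\delta\geq 1$). Either approach produces the same constant~$2$.

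The main obstacle, in my view, is justifying (ii) under this weak regularity hypothesis on $h$. The standard way is to differentiate Stein's equation to obtain
\[
f''_h(x) = (1+x^2)f_h(x) + x\bigl(h(x)-E[h(N)]\bigr) + h'(x) \qquad\text{a.e.},
\]
and then to exploit the Mills-ratio type cancellation in the combination $(1+x^2)f_h(x) + x(h(x)-E[h(N)])$, using the dual form $f_h(x)=-e^{x^2/2}\int_x^\infty(h(u)-E[h(N)])e^{-u^2/2}\,du$ for $x>0$ (and the symmetric formula for $x<0$) to show that this combination is bounded in modulus by $\|h'\|_\infty$. Since this is a classical Stein-method computation I would simply quote it rather than reproduce it in full.
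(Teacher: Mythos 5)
Your proof is correct and is essentially the same argument the paper gives: you combine the same two classical Stein bounds ($\|f'_h\|_\infty \leq \sqrt{2/\pi}\,\|h'\|_\infty$ and $\|f''_h\|_\infty \leq 2\|h'\|_\infty$, which the paper cites from Lemma 4.2 of Chatterjee (2008) while you cite Lemma 2.4 of Chen--Goldstein--Shao) and deduce the Hölder bound; the paper uses the case split $|x-y|\leq 1$ versus $|x-y|>1$, while you phrase it as the interpolation $V = V^\delta V^{1-\delta}$, which is the same computation in a marginally cleaner form.
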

\begin{proof}It is known from Lemma 2.4 in \cite{Chen2011} that there exists a solution to the equation (\ref{fklls3}) that satisfies  $\|f'\|_\infty\leq \sqrt{2/\pi}\|h'\|_\infty$ and $\|f''\|_\infty\leq 2\|h'\|_\infty.$ Hence, if $|x-y|\leq 1,$ then
$$|f'(x)-f'(y)|\leq \|f''\|_\infty|x-y|\leq 2\|h'\|_\infty|x-y|\leq 2\|h'\|_\infty|x-y|^\delta.$$
If $|x-y|> 1,$ we have
$$|f'(x)-f'(y)|\leq 2\|f'\|_\infty\leq 2\sqrt{2/\pi}\|h'\|_\infty\leq 2\|h'\|_\infty|x-y|^\delta.$$
This finishes the proof.
\end{proof}

\noindent{\it Proof of Theorem \ref{lesser4}.} Without loss of generality, we can and will assume that $\sigma=1.$ Let $f$ be the solution to Stein's equation (\ref{fklls3}) as in Proposition \ref{tt1}. Then, the Wasserstein distance can be represented as follows
$$d_1(F,N)=\sup\limits_{h\in \mathcal{C}^1:\|h'\|_\infty\leq 1}|E[Ff(F)]-E[f'(F)]|.$$
We separate the proof into three steps.

\noindent{\it Step 1.} In this step, we claim that, for every $1\leq k\leq n,$
\begin{equation}\label{u7f}
\mathfrak{D}_kf(F)=f'(F)\mathfrak{D}_kF+R_{k},
\end{equation}
where $R_k$ is bounded by
\begin{equation}\label{vvb4a}
|R_k|\leq  2^{1+\delta}\|h'\|_\infty (|\mathfrak{D}_kF|^{1+\delta}+E_k|\mathfrak{D}_kF|^{1+\delta}).
\end{equation}
To prove (\ref{u7f}), we observe that $\mathfrak{D}_kF=E'_k[F-T_kF]$ and
$$\mathfrak{D}_kf(F)=f(F)-E_k[f(F)]=E'_k[f(F)-f(T_kF)].$$
Then, by the Lagrange theorem, there exists a random variable $\theta_k$ lying between $F$ and $T_kF$ such that
$$f(T_kF)-f(F)=f'(\theta_k)(T_kF-F)=f'(F)(T_kF-F)+\left(f'(\theta_k)-f'(F)\right)(T_kF-F).$$
Taking the expectation with respect to $X'_k$ we obtain (\ref{u7f}) with $R_k$ defined by
$$R_k:=-E'_k[\left(f'(\theta_k)-f'(F)\right)(T_kF-F)],\,\,\,1\leq k\leq n.$$
It follows from the estimate (\ref{8uw}) that
\begin{align*}
|R_k|&\leq 2\|h'\|_\infty E'_k[|\theta_k-F|^\delta|T_kF-F|]\\
&= 2\|h'\|_\infty E'_k|T_kF-F|^{1+\delta}\\
&\leq 2^{1+\delta}\|h'\|_\infty (E'_k|T_kF-E_kF|^{1+\delta}+|E_kF-F|^{1+\delta})\\
&=2^{1+\delta}\|h'\|_\infty (E_k|\mathfrak{D}_kF|^{1+\delta}+|\mathfrak{D}_kF|^{1+\delta}).
\end{align*}
So the claim (\ref{u7f}) is verified.

\noindent{\it Step 2.} We now use the covariance formula (\ref{jfmn4}) to get, for any $\alpha\in[0,1],$
\begin{align*}
E[Ff(F)]=Cov(f(F),F)&=E\left[\sum\limits_{k=1}^n\mathfrak{D}_kf(F)\mathfrak{D}_k^{(\alpha)}F\right]\\
&=E\left[\sum\limits_{k=1}^nf'(F)\mathfrak{D}_kF\mathfrak{D}_k^{(\alpha)}F\right]+E\left[\sum\limits_{k=1}^nR_k\mathfrak{D}_k^{(\alpha)}F\right]\\
&=E[f'(F)Z^{(\alpha)}]+E\left[\sum\limits_{k=1}^nR_k\mathfrak{D}_k^{(\alpha)}F\right].
\end{align*}
As a consequence,
\begin{align}\label{vvb4}
E[Ff(F)]-E[f'(F)]&=E[f'(F)(Z^{(\alpha)}-1)]+\sum\limits_{k=1}^nE[R_k\mathfrak{D}_k^{(\alpha)}F].
\end{align}
We note that $f'(F)$ is bounded, $E|Z^{(\alpha)}|$ finite and $E[Z^{(\alpha)}]=Cov(F,F)=1.$ Hence, once again, we can use the covariance formula (\ref{jfmn4}) to get
$$E[f'(F)(Z^{(\alpha)}-1)]=Cov(f'(F),Z^{(\alpha)})=E\bigg[\sum\limits_{k=1}^n\mathfrak{D}_kf'(F)\mathfrak{D}^{(\beta)}_kZ^{(\alpha)}\bigg]$$
for any $\beta\in[0,1].$  Since $\mathfrak{D}_kf'(F)=E'_k[f'(F)-f'(T_kF)],$ the estimate (\ref{8uw}) gives us
\begin{align*}
|\mathfrak{D}_kf'(F)|&\leq 2\|h'\|_\infty E'_k|F-T_kF|^\delta\\
&\leq 2\|h'\|_\infty (|F-E_k[F]|^\delta+E'_k|T_kF-E_k[F]|^\delta)\\
&=2\|h'\|_\infty (|\mathfrak{D}_kF|^{\delta}+E_k|\mathfrak{D}_kF|^{\delta}),\,\,1\leq k\leq n.
\end{align*}
We therefore obtain
$$|E[f'(F)(Z^{(\alpha)}-1)]|\leq 2\|h'\|_\infty E\bigg[\sum\limits_{k=1}^n(|\mathfrak{D}_kF|^{\delta}+E_k|\mathfrak{D}_kF|^{\delta})|D^{(\beta)}_kZ^{(\alpha)}|\bigg]$$
For the second addend in the right hand side of (\ref{vvb4}), recalling (\ref{vvb4a}), we obtain the following estimate
$$\big|\sum\limits_{k=1}^nE[R_k\mathfrak{D}_k^{(\alpha)}F]\big|\leq 2^{1+\delta}\|h'\|_\infty E\bigg[\sum\limits_{k=1}^n (|\mathfrak{D}_kF|^{1+\delta}+E_k|\mathfrak{D}_kF|^{1+\delta})|D_k^{(\alpha)}F|\bigg].$$
Thus we can conclude that
\begin{align*}
|E[Ff(F)]-E[f'(F)]|&\leq 2\|h'\|_\infty E\bigg[\sum\limits_{k=1}^n(|\mathfrak{D}_kF|^{\delta}+E_k|\mathfrak{D}_kF|^{\delta})|D^{(\beta)}_kZ^{(\alpha)}|\bigg]\\
&+2^{1+\delta}\|h'\|_\infty E\bigg[\sum\limits_{k=1}^n (|\mathfrak{D}_kF|^{1+\delta}+E_k|\mathfrak{D}_kF|^{1+\delta})|D_k^{(\alpha)}F|\bigg].
\end{align*}
Then taking the supremum over all $h$ satisfying $\|h'\|_\infty\leq 1$ yields
\begin{align*}
d_1(F,N)&\leq 2 \sum\limits_{k=1}^nE\left[(|\mathfrak{D}_kF|^{\delta}+E_k|\mathfrak{D}_kF|^{\delta})|D^{(\beta)}_kZ^{(\alpha)}|\right]\\
&+2^{1+\delta} \sum\limits_{k=1}^n E\left[[(|\mathfrak{D}_kF|^{1+\delta}+E_k|\mathfrak{D}_kF|^{1+\delta})|D_k^{(\alpha)}F|\right]
\end{align*}
and the bound (\ref{8je4a}) follows replacing $F$ by $F/\sigma.$

\noindent{\it Step 3.} In this step, we verify the bound (\ref{8je4ab}). We use the H\"older inequality and the point $(ii)$ of Proposition \ref{77h4} to get
\begin{align*}
E\left[|\mathfrak{D}_kF|^{\delta}|D^{(\beta)}_kZ^{(\alpha)}|\right]&\leq (E|\mathfrak{D}_kF|^{2+\delta})^{\frac{\delta}{2+\delta}}(E|D^{(\beta)}_kZ^{(\alpha)}|^{\frac{2+\delta}{2}})^{\frac{2}{2+\delta}}\\
&\leq (E|\mathfrak{D}_kF|^{2+\delta})^{\frac{\delta}{2+\delta}}(E|D_kZ^{(\alpha)}|^{\frac{2+\delta}{2}})^{\frac{2}{2+\delta}},\,\,1\leq k\leq n.
\end{align*}
On the other hand, by the independence, we have $E_k|\mathfrak{D}_kF|^{\delta}=E[|\mathfrak{D}_kF|^{\delta}|X_i,i\neq k].$ Then, by Lyaponov's inequality, we obtain $E|E_k|\mathfrak{D}_kF|^{\delta}|^{\frac{2+\delta}{2}}\leq E|\mathfrak{D}_kF|^{2+\delta},$ and hence, we also have
$$E\left[E_k|\mathfrak{D}_kF|^{\delta}|D^{(\beta)}_kZ^{(\alpha)}|\right]\leq (E|\mathfrak{D}_kF|^{2+\delta})^{\frac{\delta}{2+\delta}}(E|D_kZ^{(\alpha)}|^{\frac{2+\delta}{2}})^{\frac{2}{2+\delta}},\,\,1\leq k\leq n.$$
So it holds that
\begin{align}
&\sum\limits_{k=1}^nE\left[(|\mathfrak{D}_kF|^{\delta}+E_k|\mathfrak{D}_kF|^{\delta})|D^{(\beta)}_kZ^{(\alpha)}|\right]\notag\\
&\hspace{3cm}\leq 2 \sum\limits_{k=1}^n(E|\mathfrak{D}_kF|^{2+\delta})^{\frac{\delta}{2+\delta}}(E|D_kZ^{(\alpha)}|^{\frac{2+\delta}{2}})^{\frac{2}{2+\delta}}\notag\\
&\hspace{3cm}\leq 2 \big(\sum\limits_{k=1}^nE|\mathfrak{D}_kF|^{2+\delta}\big)^{\frac{\delta}{2+\delta}}
\big(\sum\limits_{k=1}^nE|D_kZ^{(\alpha)}|^{\frac{2+\delta}{2}}\big)^{\frac{2}{2+\delta}}.\label{8j3a}
\end{align}
With the same arguments above, we obtain
\begin{equation}\label{8j3ab}
\sum\limits_{k=1}^n E\left[(|\mathfrak{D}_kF|^{1+\delta}+E_k|\mathfrak{D}_kF|^{1+\delta})|D_k^{(\alpha)}F|\right]\leq 2 \sum\limits_{k=1}^n E|\mathfrak{D}_kF|^{2+\delta}
\end{equation}
Inserting (\ref{8j3a}) and (\ref{8j3ab}) into (\ref{8je4a}) we obtain the bound (\ref{8je4ab}).

The proof of Theorem \ref{lesser4} is complete.\hfill$\square$

\subsection{Proof of Theorem \ref{ltser4a}}
For the distance $d_2,$ we need the following observation about the solution of Stein's equation.
\begin{prop}\label{tt2}
Let $h\in \mathcal{C}^2$ with bounded derivatives. Then the equation (\ref{fklls3}) has a solution in $\mathcal{C}^3$ that satisfies, for any $\delta\in (0,1],$
\begin{equation}\label{8uwa}
|f''(x)-f''(y)|\leq 2(2\|h'\|_\infty\vee\|h''\|_\infty)|x-y|^\delta\,\,\,\forall\,\,x,y\in \mathbb{R}.
\end{equation}
\end{prop}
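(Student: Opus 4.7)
The plan is to mimic the argument of Proposition \ref{tt1} one order up: the Hölder bound for $f''$ will follow from a uniform bound on $\|f''\|_\infty$ (to close the case $|x-y|>1$) combined with a Lipschitz-type bound on $f''$, i.e.\ a bound on $\|f'''\|_\infty$ (to close the case $|x-y|\leq 1$). I would take $f$ to be the classical Stein solution
$$f(x)=e^{x^{2}/2}\int_{-\infty}^{x}\bigl(h(t)-E[h(N)]\bigr)e^{-t^{2}/2}\,dt,$$
and note that since $h\in\mathcal{C}^{2}$ with bounded $h'$ and $h''$, standard differentiation under the integral (the Gaussian weight supplying the decay) gives $f\in\mathcal{C}^{3}$. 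The bounds $\|f'\|_\infty\leq\sqrt{2/\pi}\,\|h'\|_\infty$ and $\|f''\|_\infty\leq 2\|h'\|_\infty$ are then immediate from Lemma 4.2 of \cite{Chatterjee2008}, the same ingredient already used in Proposition \ref{tt1}.

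The principal new work is to control $\|f'''\|_\infty$, and for this my plan is to bootstrap Chatterjee's lemma by viewing $g:=f'$ as a Stein solution of its own. Differentiating Stein's equation (\ref{fklls3}) once yields $f''(x)-xf'(x)=f(x)+h'(x)$, so $g$ satisfies a Stein equation $g'(x)-xg(x)=\tilde h(x)$ with $\tilde h(x):=f(x)+h'(x)$. Taking expectations at $N$, together with Stein's lemma applied to $f$ (which gives $E[f'(N)]=E[Nf(N)]$ and, after using the differentiated Stein identity, $E[f(N)]=-E[h'(N)]$), forces $E[\tilde h(N)]=0$, so $g$ is in fact the canonical bounded Stein solution of $\tilde h$. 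Because $\tilde h'=f'+h''$ is uniformly bounded, a second application of Chatterjee's Lemma 4.2, now to the pair $(g,\tilde h)$, will give
$$\|f'''\|_\infty=\|g''\|_\infty\leq 2\|\tilde h'\|_\infty\leq 2\bigl(\|f'\|_\infty+\|h''\|_\infty\bigr)\leq 2\bigl(\sqrt{2/\pi}\,\|h'\|_\infty+\|h''\|_\infty\bigr).$$

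With $\|f''\|_\infty$ and $\|f'''\|_\infty$ both controlled, the Hölder bound (\ref{8uwa}) will follow from the same two-regime split used in Proposition \ref{tt1}: for $|x-y|\leq 1$, use the Lipschitz estimate $|f''(x)-f''(y)|\leq\|f'''\|_\infty|x-y|\leq\|f'''\|_\infty|x-y|^{\delta}$, while for $|x-y|>1$, use $|f''(x)-f''(y)|\leq 2\|f''\|_\infty\leq 4\|h'\|_\infty\leq 2(2\|h'\|_\infty\vee\|h''\|_\infty)|x-y|^{\delta}$. The main delicate point, and the obstacle I would most expect to fight, is matching the advertised constant exactly: one must absorb $\sqrt{2/\pi}\|h'\|_\infty+\|h''\|_\infty$ cleanly into $2\|h'\|_\infty\vee\|h''\|_\infty$, using $\sqrt{2/\pi}<1$ together with the obvious bounds $\|h'\|_\infty\leq 2\|h'\|_\infty\vee\|h''\|_\infty$ and $\|h''\|_\infty\leq 2\|h'\|_\infty\vee\|h''\|_\infty$; verifying that the choice $g=f'$ is really the bounded representative of the Stein solution for $\tilde h$ (so that Chatterjee's bound can be reused verbatim) is the other small technical check.
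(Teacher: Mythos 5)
Your overall skeleton matches the paper's: bound $\|f''\|_\infty$ and $\|f'''\|_\infty$, then split into the regimes $|x-y|\leq 1$ and $|x-y|>1$ exactly as in Proposition~\ref{tt1}. But the paper obtains the third-derivative bound differently, and this is precisely where your argument breaks. The author simply invokes Theorem~1.1 of Daly (the reference \cite{Daly2008}), which gives the decoupled estimates $\|f''\|_\infty\leq 2\|h'\|_\infty$ \emph{and} $\|f'''\|_\infty\leq 2\|h''\|_\infty$. With those two, the small-gap regime gives $|f''(x)-f''(y)|\leq 2\|h''\|_\infty|x-y|^\delta$ and the large-gap regime gives $|f''(x)-f''(y)|\leq 4\|h'\|_\infty|x-y|^\delta$; both are manifestly $\leq 2(2\|h'\|_\infty\vee\|h''\|_\infty)|x-y|^\delta$, and the stated constant drops out for free.

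Your bootstrap (reading $g=f'$ as the bounded Stein solution of $g'-xg=\tilde h$ with $\tilde h=f+h'$ and $E[\tilde h(N)]=0$, and applying Chatterjee's Lemma~4.2 again) is itself a correct computation, but it only yields
\[
\|f'''\|_\infty\leq 2\|\tilde h'\|_\infty\leq 2\bigl(\|f'\|_\infty+\|h''\|_\infty\bigr)\leq 2\bigl(\sqrt{2/\pi}\,\|h'\|_\infty+\|h''\|_\infty\bigr),
\]
and this right-hand side is \emph{not} dominated by $2\bigl(2\|h'\|_\infty\vee\|h''\|_\infty\bigr)$. For example, with $\|h'\|_\infty=1$ and $\|h''\|_\infty=2$ you get $\sqrt{2/\pi}+2\approx 2.8$ against a target of $2$. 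The ``obvious bounds'' you invoke only give $\sqrt{2/\pi}\,\|h'\|_\infty+\|h''\|_\infty\leq\bigl(1+\tfrac{1}{2}\sqrt{2/\pi}\bigr)\bigl(2\|h'\|_\infty\vee\|h''\|_\infty\bigr)$, which is a multiplier of about $1.4$, not $1$. So the delicate point you flagged as ``the obstacle I would most expect to fight'' is in fact a genuine gap: the coupling of $\|h'\|_\infty$ into the third-derivative bound cannot be absorbed into the maximum. The missing ingredient is Daly's sharper, decoupled estimate $\|f'''\|_\infty\leq 2\|h''\|_\infty$, which is obtained via a direct analysis of the Stein solution rather than by differentiating the equation and reapplying the first-order lemma.
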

\begin{proof}It is known from Theorem 1.1 in \cite{Daly2008} that there exists a solution to the equation (\ref{fklls3}) that satisfies $\|f''\|_\infty\leq 2\|h'\|_\infty$ and $\|f'''\|_\infty\leq 2\|h''\|_\infty.$  Hence, the proof of (\ref{8uwa}) is similar to that of (\ref{8uw}). So we omit it.
\end{proof}
We also need a technical lemma.
\begin{lem}\label{fo02}Let $F=F(X)\in L^{3}(P)$ be centered and $Z^{(\alpha,\beta)}$ be as in Theorem \ref{ltser4a}.  It holds that
$$E[F^3]=2E[Z^{(\alpha,\beta)}].$$
\end{lem}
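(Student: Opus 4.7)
The plan is to apply the covariance formula (Proposition \ref{lods3}) twice, together with an explicit expansion of $\mathfrak{D}_k F^2$.

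First I would exploit that $E[F]=0$, so $E[F^3]=\mathrm{Cov}(F^2,F)$. Applying Proposition \ref{lods3} to $U=F^2$ and $V=F$ with parameter $\alpha$ gives
\begin{equation*}
E[F^3]=\sum_{k=1}^n E\bigl[\mathfrak{D}_k F^2 \cdot \mathfrak{D}^{(\alpha)}_k F\bigr].
\end{equation*}
(The moment assumption $F\in L^3(P)$ together with $|\mathfrak{D}^{(\alpha)}_kF|\le|\mathfrak{D}_kF|+E_k|\mathfrak{D}_kF|$ and Proposition \ref{77h4}(i) justifies the use of the covariance identity here.)

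Next I would compute $\mathfrak{D}_k F^2$ explicitly. Writing $F=E_k F+\mathfrak{D}_k F$ and using that $E_k[\mathfrak{D}_k F]=0$, one obtains $E_k[F^2]=(E_kF)^2+E_k[(\mathfrak{D}_kF)^2]$, hence
\begin{equation*}
\mathfrak{D}_k F^2 = 2(E_kF)\mathfrak{D}_kF+(\mathfrak{D}_kF)^2-E_k[(\mathfrak{D}_kF)^2] = 2F\mathfrak{D}_kF-(\mathfrak{D}_kF)^2-E_k[(\mathfrak{D}_kF)^2],
\end{equation*}
where the last equality uses $E_k F=F-\mathfrak{D}_k F$. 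Substituting this into the previous identity splits $E[F^3]$ into two pieces:
\begin{equation*}
E[F^3]=2\,E\!\left[\sum_{k=1}^n F\,\mathfrak{D}_kF\,\mathfrak{D}^{(\alpha)}_kF\right]-E\!\left[\sum_{k=1}^n\bigl(|\mathfrak{D}_kF|^2+E_k|\mathfrak{D}_kF|^2\bigr)\mathfrak{D}^{(\alpha)}_kF\right].
\end{equation*}

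Finally I would recognize the inner sum in the first term as $F\cdot Z^{(\alpha)}$, so that $E[\sum_k F\mathfrak{D}_kF\mathfrak{D}^{(\alpha)}_kF]=E[F\,Z^{(\alpha)}]=\mathrm{Cov}(F,Z^{(\alpha)})$. A second application of Proposition \ref{lods3}, now with $U=F$, $V=Z^{(\alpha)}$ and parameter $\beta$, yields
\begin{equation*}
E[F\,Z^{(\alpha)}]=\sum_{k=1}^n E\bigl[\mathfrak{D}_kF\,\mathfrak{D}^{(\beta)}_kZ^{(\alpha)}\bigr].
\end{equation*}
Plugging this back and comparing with the definition of $Z^{(\alpha,\beta)}$ gives exactly $E[F^3]=2E[Z^{(\alpha,\beta)}]$.

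The only mildly delicate step is the expansion of $\mathfrak{D}_kF^2$ and verifying that all expectations in sight are finite so that both invocations of the covariance formula are legal; the moment bound $F\in L^3(P)$, together with the conditional Lyapunov/H\"older estimates already used in the proof of Theorem \ref{lesser4}, makes this routine.
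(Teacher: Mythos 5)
Your proposal is correct and follows essentially the same path as the paper's own proof: apply the covariance identity once to $\mathrm{Cov}(F^2,F)$, expand $\mathfrak{D}_kF^2=2F\mathfrak{D}_kF-|\mathfrak{D}_kF|^2-E_k|\mathfrak{D}_kF|^2$, and apply the covariance identity a second time to $\mathrm{Cov}(F,Z^{(\alpha)})$ to recover the defining expression of $Z^{(\alpha,\beta)}$. You simply spell out the verification of the $\mathfrak{D}_kF^2$ identity and the integrability check, both of which the paper leaves implicit.
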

\begin{proof} It is easy to check that
$$\mathfrak{D}_kF^2=2F\mathfrak{D}_kF-|\mathfrak{D}_kF|^2-E_k|\mathfrak{D}_kF|^2,\,\,1\leq k\leq n.$$
Hence, by the covariance formula (\ref{jfmn4}), we obtain
\begin{align*}
E[F^3]&=E\big[\sum\limits_{k=1}^n \mathfrak{D}_kF^2\mathfrak{D}^{(\alpha)}_kF\big]\\
&=2E\big[F\sum\limits_{k=1}^n \mathfrak{D}_kF\mathfrak{D}^{(\alpha)}_kF\big]-E\big[\sum\limits_{k=1}^n (|\mathfrak{D}_kF|^2+E_k|\mathfrak{D}_kF|^2)\mathfrak{D}^{(\alpha)}_kF\big]\\
&=2E[FZ^{(\alpha)}]-E\big[\sum\limits_{k=1}^n (|\mathfrak{D}_kF|^2+E_k|\mathfrak{D}_kF|^2)\mathfrak{D}^{(\alpha)}_kF\big]\\
&=2E\big[\sum\limits_{k=1}^n \mathfrak{D}_kF\mathfrak{D}^{(\beta)}_kZ^{(\alpha)}\big]-E\big[\sum\limits_{k=1}^n (|\mathfrak{D}_kF|^2+E_k|\mathfrak{D}_kF|^2)\mathfrak{D}^{(\alpha)}_kF\big]\\
&=2E[Z^{(\alpha,\beta)}].
\end{align*}
This completes the proof.
\end{proof}
\noindent{\it Proof of Theorem \ref{ltser4a}.} It suffices to consider $\sigma=1.$ Let $f$ be a solution of the equation (\ref{fklls3}) that has the properties mentioned in Proposition \ref{tt2}. We have
$$d_2(F,N)=\sup\limits_{h\in \mathcal{C}^2:\|h'\|_\infty,\|h''\|_\infty\leq 1}|E[Ff(F)]-E[f'(F)]|.$$
We separate the proof into three steps.

\noindent{\it Step 1.} We claim that, for every $1\leq k\leq n,$
\begin{align}
\mathfrak{D}_kf(F)=f'(F)\mathfrak{D}_kF-\frac{1}{2}f''(F)(|\mathfrak{D}_kF|^2+E_k|\mathfrak{D}_kF|^2)+R_k\label{u7fc}
\end{align}
where $R_k$ is bounded by
\begin{equation}\label{v34a}
|R_k|\leq 2^{1+\delta}(2\|h'\|_\infty\vee\|h''\|_\infty)(|\mathfrak{D}_kF|^{2+\delta}+E_k|\mathfrak{D}_kF|^{2+\delta}).
\end{equation}
By the Taylor expansion, there exists a random variable $\theta_k$ lying between $F$ and $T_kF$ such that
\begin{align*}
f(T_kF)-f(F)&=f'(F)(T_kF-F)+\frac{1}{2}f''(\theta_k)(T_kF-F)^2\\
&=f'(F)(T_kF-F)+\frac{1}{2}f''(F)(T_kF-F)^2+\frac{1}{2}\left(f''(\theta_k)-f''(F)\right)(T_kF-F)^2.
\end{align*}
We observe that $E'_k[(T_kF-F)^2]=|\mathfrak{D}_kF|^2+E_k|\mathfrak{D}_kF|^2.$ Hence, by taking the expectation with respect to $X'_k,$ we obtain (\ref{u7fc}) with $R_k$ defined by
$$R_k:=-\frac{1}{2}E'_k[\left(f''(\theta_k)-f''(F)\right)(T_kF-F)^2],\,\,\,1\leq k\leq n.$$
Thanks to the estimate (\ref{8uwa}) we get
\begin{align*}
|R_k|&\leq (2\|h'\|_\infty\vee\|h''\|_\infty)E'_k[|\theta_k-F|^\delta|T_kF-F|^2]\\
&\leq (2\|h'\|_\infty\vee\|h''\|_\infty)E'_k|T_kF-F|^{2+\delta} \\
&\leq 2^{1+\delta}(2\|h'\|_\infty\vee\|h''\|_\infty)(|\mathfrak{D}_kF|^{2+\delta}+E_k|\mathfrak{D}_kF|^{2+\delta}).
\end{align*}
This completes the proof of (\ref{u7fc}).

\noindent{\it Step 2.} For any $\alpha\in [0,1],$ by the covariance formula (\ref{jfmn4}) and the result of the previous step, we deduce
\begin{align*}
&E[Ff(F)]=E\big[\sum\limits_{k=1}^n\mathfrak{D}_kf(F)\mathfrak{D}^{(\alpha)}_kF\big]\\
&=E\big[f'(F)\sum\limits_{k=1}^n\mathfrak{D}_kF\mathfrak{D}^{(\alpha)}_kF\big]
-\frac{1}{2}E\big[f''(F)\sum\limits_{k=1}^n(|\mathfrak{D}_kF|^2+E_k|\mathfrak{D}_kF|^2)\mathfrak{D}^{(\alpha)}_kF\big]
+\sum\limits_{k=1}^nE[R_k\mathfrak{D}^{(\alpha)}_kF]\\
&=E[f'(F)Z^{(\alpha)}]-E[f''(F)U^{(\alpha)}]+\sum\limits_{k=1}^nE[R_k\mathfrak{D}^{(\alpha)}_kF],
\end{align*}
where
$$U^{(\alpha)}:=\frac{1}{2}\sum\limits_{k=1}^n(|\mathfrak{D}_kF|^2+E_k|\mathfrak{D}_kF|^2)\mathfrak{D}^{(\alpha)}_kF.$$
Since $E[Z^{(\alpha)}]=Var(F)=1,$ we obtain
\begin{align}
E[Ff(F)]&-E[f'(F)]=E[f'(F)(Z^{(\alpha)}-1)]-E[f''(F)U^{(\alpha)}]+\sum\limits_{k=1}^nE[R_k\mathfrak{D}^{(\alpha)}_kF]\notag\\
&=E\big[\sum\limits_{k=1}^n\mathfrak{D}_kf'(F)\mathfrak{D}^{(\beta)}_kZ^{(\alpha)}\big]-E[f''(F)U^{(\alpha)}]
+\sum\limits_{k=1}^nE[R_k\mathfrak{D}^{(\alpha)}_kF]\label{kl3la}
\end{align}
for any $\beta\in [0,1].$

By using the same argument as in the proof of (\ref{u7f}) we have
\begin{equation}\label{kl3l}
\mathfrak{D}_kf'(F)=f''(F)\mathfrak{D}_kF+\bar{R}_k,\,\,1\leq k\leq n,
\end{equation}
where $\bar{R}_k:=-E'_k[\left(f''(\theta_k)-f''(F)\right)(T_kF-F)].$ Moreover, it follows from the estimate (\ref{8uwa}) that $\bar{R}_k$ is bounded by
\begin{align}
|\bar{R}_k|&\leq 2(2\|h'\|_\infty\vee\|h''\|_\infty)E'_k|T_kF-F|^{1+\delta}\notag\\
&\leq  2^{1+\delta}(2\|h'\|_\infty\vee\|h''\|_\infty) (|\mathfrak{D}_kF|^{1+\delta}+E_k|\mathfrak{D}_kF|^{1+\delta}),\,\,1\leq k\leq n.\label{v5b4a}
\end{align}
Inserting (\ref{kl3l}) into (\ref{kl3la}) yields
\begin{align}
&E[Ff(F)]-E[f'(F)]\notag\\
&=E\big[\sum\limits_{k=1}^nf''(F)\mathfrak{D}_kF\mathfrak{D}^{(\beta)}_kZ^{(\alpha)}\big]
+E\big[\sum\limits_{k=1}^n\bar{R}_k\mathfrak{D}^{(\beta)}_kZ^{(\alpha)}\big]-E[f''(F)U^{(\alpha)}]
+\sum\limits_{k=1}^nE[R_k\mathfrak{D}^{(\alpha)}_kF]\notag\\
&=E[f''(F)Z^{(\alpha,\beta)}]+\sum\limits_{k=1}^nE[R_k\mathfrak{D}^{(\alpha)}_kF]
+\sum\limits_{k=1}^nE[\bar{R}_k\mathfrak{D}^{(\beta)}_kZ^{(\alpha)}].\label{dlm1}
\end{align}
From Lemma \ref{fo02}, $E[Z^{(\alpha,\beta)}]=\frac{1}{2}E[F^3]=0.$ Hence, once again, we can employ the covariance formula (\ref{jfmn4}) to rewrite (\ref{dlm1}) as follows
\begin{align}
E[Ff(F)]-E[f'(F)]
&=Cov(f''(F),Z^{(\alpha,\beta)})+\sum\limits_{k=1}^nE[R_k\mathfrak{D}^{(\alpha)}_kF]
+\sum\limits_{k=1}^nE[\bar{R}_k\mathfrak{D}^{(\beta)}_kZ^{(\alpha)}]\notag\\
&=E\big[\sum\limits_{k=1}^n\mathfrak{D}_kf''(F)\mathfrak{D}^{(\gamma)}_kZ^{(\alpha,\beta)}\big]
+\sum\limits_{k=1}^nE[R_k\mathfrak{D}^{(\alpha)}_kF]
+\sum\limits_{k=1}^nE[\bar{R}_k\mathfrak{D}^{(\beta)}_kZ^{(\alpha)}]\notag
\end{align}
for any $\gamma\in [0,1].$ Furthermore, by the estimate (\ref{8uwa}), we have
\begin{align*}
|\mathfrak{D}_kf''(F)|&\leq E'_k|f''(F)-f''(T_kF)|\\
&\leq 2(2\|h'\|_\infty\vee\|h''\|_\infty) E'_k|F-T_kF|^\delta\\
&\leq 2(2\|h'\|_\infty\vee\|h''\|_\infty) (|\mathfrak{D}_kF|^{\delta}+E_k|\mathfrak{D}_kF|^{\delta}),\,\,1\leq k\leq n.
\end{align*}
Those, combined with (\ref{v34a}) and (\ref{v5b4a}), imply that
\begin{align*}
|E[Ff(F)]-E[f'(F)]|&\leq 2(2\|h'\|_\infty\vee\|h''\|_\infty) \sum\limits_{k=1}^nE|(|\mathfrak{D}_kF|^{\delta}+E_k|\mathfrak{D}_kF|^{\delta})D^{(\gamma)}_kZ^{(\alpha,\beta)}|\\
&+2^{1+\delta}(2\|h'\|_\infty\vee\|h''\|_\infty)\sum\limits_{k=1}^nE|(|\mathfrak{D}_kF|^{2+\delta}+E_k|\mathfrak{D}_kF|^{2+\delta})\mathfrak{D}^{(\alpha)}_kF|\\
&+2^{1+\delta}(2\|h'\|_\infty\vee\|h''\|_\infty)\sum\limits_{k=1}^nE|(|\mathfrak{D}_kF|^{1+\delta}+E_k|\mathfrak{D}_kF|^{1+\delta})\mathfrak{D}^{(\beta)}_kZ^{(\alpha)}|.
\end{align*}
As a consequence, by taking the supremum over all $h$ satisfying $\|h'\|_\infty,\|h''\|_\infty\leq 1,$ we deduce
\begin{align*}
d_2(F,N)&\leq  4 \sum\limits_{k=1}^nE|(|\mathfrak{D}_kF|^{\delta}+E_k|\mathfrak{D}_kF|^{\delta})D^{(\gamma)}_kZ^{(\alpha,\beta)}|\\
&+2^{2+\delta}\sum\limits_{k=1}^nE|(|\mathfrak{D}_kF|^{1+\delta}+E_k|\mathfrak{D}_kF|^{1+\delta})\mathfrak{D}^{(\beta)}_kZ^{(\alpha)}|\\
&+2^{2+\delta}\sum\limits_{k=1}^nE|(|\mathfrak{D}_kF|^{2+\delta}+E_k|\mathfrak{D}_kF|^{2+\delta})\mathfrak{D}^{(\alpha)}_kF|.
\end{align*}
So we obtain the bound (\ref{kofd4})  by replacing $F$ by $F/\sigma.$

\noindent{\it Step 3.} This step is similar to {\it Step 3} in the proof of Theorem \ref{lesser4}. We have
$$\sum\limits_{k=1}^nE|(|\mathfrak{D}_kF|^{\delta}+E_k|\mathfrak{D}_kF|^{\delta})D^{(\gamma)}_kZ^{(\alpha,\beta)}|\leq 2\big(\sum\limits_{k=1}^nE|\mathfrak{D}_kF|^{3+\delta}\big)^{\frac{\delta}{3+\delta}}
\big(\sum\limits_{k=1}^nE|D_kZ^{(\alpha,\beta)}|^{\frac{3+\delta}{3}}\big)^{\frac{3}{3+\delta}},$$
$$\sum\limits_{k=1}^nE|(|\mathfrak{D}_kF|^{1+\delta}+E_k|\mathfrak{D}_kF|^{1+\delta})\mathfrak{D}^{(\beta)}_kZ^{(\alpha)}|\leq 2\big(\sum\limits_{k=1}^nE|\mathfrak{D}_kF|^{3+\delta}\big)^{\frac{1+\delta}{3+\delta}}
\big(\sum\limits_{k=1}^nE|\mathfrak{D}_kZ^{(\alpha)}|^{\frac{3+\delta}{2}}\big)^{\frac{2}{3+\delta}},$$
$$\sum\limits_{k=1}^nE|(|\mathfrak{D}_kF|^{2+\delta}+E_k|\mathfrak{D}_kF|^{2+\delta})\mathfrak{D}^{(\alpha)}_kF|\leq 2\sum\limits_{k=1}^nE|\mathfrak{D}_kF|^{3+\delta}.$$
So the bound (\ref{kofd44}) follows from (\ref{kofd4}).

The proof of Theorem \ref{ltser4a} is complete.\hfill$\square$

\section{Appendix: Moment inequalities}\label{hj48y9}
In this Section, to make the paper self-contained, we provide some useful moment inequalities which are stated in terms of difference operators $\mathfrak{D}_k,1\leq k\leq n.$ More moment inequalities for nonlinear statistics can be found in Chapter 15 of \cite{Boucheron2013}.

\begin{prop}[Marcinkiewicz-Zygmund type inequality] Let $U=U(X)\in L^{p}(P)$ for some $p>2.$ We have
\begin{equation}\label{9hj4}
\|U\|_p^2\leq |EU|^2+ (p-1)\sum\limits_{k=1}^n\|\mathfrak{D}_kU\|_p^2,
\end{equation}
where $\|.\|_p$ denotes the norm in $L^p(P).$
\end{prop}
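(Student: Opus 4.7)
The plan is to carry out a standard martingale decomposition of $U$ along the natural filtration $(\mathcal{F}_i)$ and to apply a martingale $L^p$ inequality with sharp constant $p-1$. Concretely, set $M_0 := EU$ and $M_i := E[U\mid\mathcal{F}_i]$ for $i=1,\dots,n$, so that $M_n = U$ and the differences $d_i := M_i - M_{i-1}$ form a martingale difference sequence with respect to $(\mathcal{F}_i)$.

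First I would relate $d_i$ to $\mathfrak{D}_i U$. Since $X_1,\dots,X_n$ are independent, the variable $E_i[U]$ does not depend on $X_i$, so $E[E_iU\mid\mathcal{F}_i] = E[E_iU\mid\mathcal{F}_{i-1}] = E[U\mid\mathcal{F}_{i-1}] = M_{i-1}$. Hence
$$d_i = E[U-E_iU\mid\mathcal{F}_i] = E[\mathfrak{D}_iU\mid\mathcal{F}_i],$$
and conditional Jensen's inequality gives $\|d_i\|_p \leq \|\mathfrak{D}_iU\|_p$ for every $i$.

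The core of the argument is the one-step inequality: for any $\mathcal{G}$-measurable $X\in L^p$ and any $Y\in L^p$ with $E[Y\mid\mathcal{G}]=0$,
\begin{equation}
\|X+Y\|_p^2 \leq \|X\|_p^2 + (p-1)\|Y\|_p^2,\qquad p\geq 2.
\label{onestep}
\end{equation}
I would prove \eqref{onestep} by studying $F(\lambda) := (E|X+\lambda Y|^p)^{2/p}$ on $[0,1]$. Writing $G(\lambda) := E|X+\lambda Y|^p$, the martingale property $E[Y\mid\mathcal{G}]=0$ together with the $\mathcal{G}$-measurability of $X$ gives $G'(0) = pE[|X|^{p-2}XE[Y\mid\mathcal{G}]] = 0$, hence $F'(0)=0$. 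A direct computation expresses $F''$ in terms of $G,G',G''$, the coefficient of $(G')^2$ being nonpositive when $p\geq 2$; combined with H\"older's inequality
$$G''(\lambda) = p(p-1)E[|X+\lambda Y|^{p-2}Y^2] \leq p(p-1)\,G(\lambda)^{(p-2)/p}\,\|Y\|_p^2,$$
this yields $F''(\lambda)\leq 2(p-1)\|Y\|_p^2$ for every $\lambda\in[0,1]$. The Taylor identity $F(1) = F(0) + F'(0) + \int_0^1(1-\lambda)F''(\lambda)\,d\lambda$ then delivers \eqref{onestep}.

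Finally, applying \eqref{onestep} successively with $X = M_{i-1}$, $Y = d_i$, $\mathcal{G} = \mathcal{F}_{i-1}$ and iterating over $i=1,\dots,n$ telescopes to
$$\|U\|_p^2 = \|M_n\|_p^2 \leq \|M_0\|_p^2 + (p-1)\sum_{i=1}^n \|d_i\|_p^2 = |EU|^2 + (p-1)\sum_{i=1}^n \|d_i\|_p^2,$$
and the bound $\|d_i\|_p \leq \|\mathfrak{D}_iU\|_p$ established in the first step completes the proof. The main obstacle is the sharp one-step inequality \eqref{onestep}, i.e.\ verifying $F''(\lambda)\leq 2(p-1)\|Y\|_p^2$; once that is in place, the martingale decomposition and conditional Jensen are routine.
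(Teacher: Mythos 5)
Your proof is correct and follows essentially the same route as the paper: both use the Doob martingale decomposition $U=EU+\sum_k E[\mathfrak{D}_kU\mid\mathcal{F}_k]$ (your $d_i$ is exactly the paper's $Y_i$), the sharp martingale $L^p$ inequality with constant $p-1$, and conditional Jensen for $\|d_i\|_p\leq\|\mathfrak{D}_iU\|_p$. The only difference is that the paper cites Rio's Theorem~2.1 for the key $L^p$ inequality, whereas you supply a self-contained calculus proof of the one-step version; the computation there (including $F'(0)=0$, the sign of the $(G')^2$ term, and the H\"older step for $G''$) is sound.
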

\begin{proof} It follows from the proof of Proposition 2.3 in \cite{dungnt2019} that we can write $U=EU+Y_1+...+Y_n,$ where $Y_k=E[\mathfrak{D}_kU|\mathcal{F}_k],\,\,1\leq k\leq n.$ Then, the inequality (\ref{9hj4}) follows directly from Theorem 2.1 in \cite{Rio2009a} and the fact that $\|Y_k\|_p\leq \|\mathfrak{D}_kU\|_p.$
\end{proof}
\begin{prop}[von Bahr-Esseen type inequality] Let $U=U(X)\in L^{1+\delta}(P)$ for some $\delta\in (0,1].$ We have
 \begin{equation}\label{k1h5}
E|U|^{1+\delta}\leq  |EU|^{1+\delta}+2^{2-\delta}\sum\limits_{k=1}^nE|\mathfrak{D}_kU|^{1+\delta}\leq  |EU|^{1+\delta}+4\sum\limits_{k=1}^nE|\mathfrak{D}_kU|^{1+\delta}.
\end{equation}
In particular, for $\delta=1,$ we have the Efron-Stein inequality that reads
\begin{equation}\label{k1h5a}
E|U|^{2}\leq |EU|^{2}+\sum\limits_{k=1}^nE|\mathfrak{D}_kU|^{2}.
\end{equation}
\end{prop}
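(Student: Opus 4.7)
The plan is to reduce the inequality to a von Bahr-Esseen type moment bound for a martingale difference sequence, reusing the decomposition that already appeared in the proof of the preceding Marcinkiewicz-Zygmund inequality.

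First I would write $U = EU + Y_1 + \cdots + Y_n$ with $Y_k := E[\mathfrak{D}_k U \mid \mathcal{F}_k]$; as noted in the proof of the previous proposition (via Proposition 2.3 of \cite{dungnt2019}), the $Y_k$ form a martingale difference sequence with respect to the filtration $(\mathcal{F}_k)$, and conditional Jensen gives $\|Y_k\|_{1+\delta} \leq \|\mathfrak{D}_k U\|_{1+\delta}$. The bound on $E|U|^{1+\delta}$ therefore reduces to controlling the $(1+\delta)$-th moment of a martingale in terms of the $(1+\delta)$-th moments of its increments.

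The key analytic input is the elementary pointwise inequality: for $p = 1+\delta \in (1,2]$ and real numbers $a,b$,
$$|a+b|^{p} \leq |a|^{p} + p|a|^{p-1}\mathrm{sign}(a)\, b + 2^{2-\delta}|b|^{p}.$$
This is the standard convexity estimate underlying the von Bahr-Esseen inequality and can be verified by separating the regimes $|b|\leq |a|$ and $|b|>|a|$. Setting $M_k := EU + Y_1 + \cdots + Y_k$, applying this pointwise inequality with $a = M_{k-1}$ and $b = Y_k$, taking conditional expectation with respect to $\mathcal{F}_{k-1}$, and using the martingale property $E[Y_k\mid \mathcal{F}_{k-1}]=0$, the linear term vanishes and we obtain
$$E\bigl[|M_k|^{1+\delta}\bigm|\mathcal{F}_{k-1}\bigr] \leq |M_{k-1}|^{1+\delta} + 2^{2-\delta}\, E\bigl[|Y_k|^{1+\delta}\bigm|\mathcal{F}_{k-1}\bigr].$$
Taking total expectation and telescoping over $k=1,\dots,n$ yields $E|U|^{1+\delta} \leq |EU|^{1+\delta} + 2^{2-\delta}\sum_{k=1}^{n} E|Y_k|^{1+\delta}$, and combining with $E|Y_k|^{1+\delta}\leq E|\mathfrak{D}_k U|^{1+\delta}$ gives the first inequality in (\ref{k1h5}); the second follows from the crude bound $2^{2-\delta}\leq 4$ for $\delta\in(0,1]$.

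For the Efron-Stein special case $\delta=1$, the constant $2^{2-\delta}=2$ is not sharp, so I would argue separately: since the $Y_k$ are orthogonal in $L^2$ as martingale differences, $E|U-EU|^2 = \sum_{k=1}^n E|Y_k|^2 \leq \sum_{k=1}^n E|\mathfrak{D}_kU|^2$, which gives (\ref{k1h5a}). The main obstacle is justifying the sharp constant $2^{2-\delta}$ in the pointwise inequality; if this turns out to be inconvenient, I would instead cite the martingale version of the von Bahr-Esseen inequality already available in the Rio reference \cite{Rio2009a} that was used to handle the Marcinkiewicz-Zygmund bound (\ref{9hj4}).
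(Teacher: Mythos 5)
Your proof is correct, but it follows a genuinely different route from the paper's.  The paper does \emph{not} reduce to a martingale moment bound; it works entirely inside its own framework.  It sets $h(x)=|x|^{1+\delta}$, writes $h(U)-h(EU)=V\,(U-EU)$ with $V:=\int_0^1 h'(tU+(1-t)EU)\,dt$, notes that $h'$ is $\delta$-H\"older with constant $2^{1-\delta}(1+\delta)$ so that $|\mathfrak{D}_kV|\le 2^{1-\delta}(|\mathfrak{D}_kU|^\delta+E_k|\mathfrak{D}_kU|^\delta)$, and then applies the covariance identity of Proposition~\ref{lods3} to $\operatorname{Cov}(V,U)$ followed by H\"older to pick up the constant $2^{2-\delta}$.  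This mirrors the proof of the Marcinkiewicz--Zygmund bound (\ref{9hj4}) and, more importantly, mirrors Step~1 of the proofs of the main theorems (the $\delta$-H\"older remainder estimate is the same device used for Stein's solution in Propositions~\ref{tt1}--\ref{tt2}), so the paper's choice keeps the whole paper operating with a single toolbox.

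Your approach --- pass through $Y_k=E[\mathfrak{D}_kU\mid\mathcal F_k]$, observe these are martingale differences with $\|Y_k\|_{1+\delta}\le\|\mathfrak{D}_kU\|_{1+\delta}$, apply a pointwise von Bahr--Esseen estimate and telescope --- is also valid and is arguably more standard as a citation chain.  Two remarks are worth making.  First, the constant $2^{2-\delta}$ in your pointwise inequality is not the sharp classical one: the classical Bahr--Esseen pointwise bound is $|a+b|^p\le|a|^p+p|a|^{p-1}\mathrm{sgn}(a)\,b+2|b|^p$ for $1\le p\le 2$, and since $2\le 2^{2-\delta}$ for $\delta\in(0,1]$ your claimed inequality follows from the classical one; in fact your approach would even give the Proposition with the better constant $2$ in place of $2^{2-\delta}$.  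Second, you should be a little careful verifying the pointwise inequality by the regime split: in the regime $|b|\le|a|$, a Taylor/integral-remainder argument gives the coefficient $p-1=\delta\le 1$, but in the regime $|b|>|a|$ a naive bound via $(|a|+|b|)^p$ overshoots badly, and one has to analyze $\psi(a,b)=|a+b|^p-|a|^p-p|a|^{p-1}\mathrm{sgn}(a)\,b$ directly (the supremum, attained as $p\to1^+$ and $a\to 0^-$, is $2$).  Given this, your final remark --- that one may simply invoke the martingale von Bahr--Esseen inequality as a citable black box, as Rio's result was used for (\ref{9hj4}) --- is probably the cleanest way to finish if you don't want to reprove the pointwise bound.
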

\begin{proof} Put $h(x):=|x|^{1+\delta}$ and $V:=\int_0^1 h'(tU+(1-t)EU)dt.$ It is easy to check that
$$|h'(x)-h'(y)|=(1+\delta)||x|^\delta sign(x)-|y|^\delta sign(y)|\leq 2^{1-\delta}(1+\delta) |x-y|^\delta\,\,\forall\,\,x,y\in \mathbb{R}.$$
Hence,
$$|\mathfrak{D}_kV|=|E'_k[V-T_kV]|\leq 2^{1-\delta}E'_k|U-T_kU|^\delta \leq 2^{1-\delta}(|\mathfrak{D}_kU|^\delta+E_k|\mathfrak{D}_kU|^\delta),\,\,1\leq k\leq n.$$
By Taylor's expansion we have
$$h(U)-h(EU)=\int_0^1 h'(tU+(1-t)EU)dt(U-EU)=V(U-EU).$$
An application of Proposition \ref{lods3} gives us
\begin{align*}
E|U|^{1+\delta}-|EU|^{1+\delta}&=E[V(U-EU)]=Cov(V,U)\\
&= \sum\limits_{k=1}^nE\left[\mathfrak{D}_kV\mathfrak{D}^{(\alpha)}_kU\right]\\
&\leq 2^{1-\delta}\sum\limits_{k=1}^nE\left[(|\mathfrak{D}_kU|^\delta+E_k|\mathfrak{D}_kU|^\delta)|\mathfrak{D}^{(\alpha)}_kU|\right]\\
&\leq 2^{2-\delta}\sum\limits_{k=1}^nE|\mathfrak{D}_kU|^{1+\delta}.
\end{align*}
This finishes the proof of (\ref{k1h5}). When $\delta=1,$ we have
\begin{align*}
E|U|^2&=|EU|^2+Cov(U,U)\\
&=|EU|^2+ \sum\limits_{k=1}^nE\left[\mathfrak{D}_kU\mathfrak{D}^{(\alpha)}_kU\right]\\
&\leq |EU|^2+ \sum\limits_{k=1}^nE|\mathfrak{D}_kU|^2.
\end{align*}
So the proof of Proposition is complete. The reader can consult Section 3.1 in \cite{Boucheron2013} for the different versions of the Efron-Stein inequality.
\end{proof}

\noindent {\bf Acknowledgments.} The author thanks the anonymous referee for valuable comments for improving the paper. This research was funded by Viet Nam National Foundation for Science and Technology Development (NAFOSTED) under grant number 101.03-2019.08. A part of this paper was done while the author was visiting the Vietnam Institute for Advanced Study in Mathematics (VIASM). The author would like to thank the VIASM for financial support and hospitality.

\end{document}